\newtheorem{theorem}{Theorem}[section]
\newtheorem{corollary}[theorem]{Corollary}
\theoremstyle{definition}
\newtheorem{definition}[theorem]{Definition}
\newtheorem{question}[theorem]{Question}
\newtheorem{proposition}[theorem]{Proposition}
\newtheorem{conjecture}[theorem]{Conjecture}
\theoremstyle{remark}
\newcommand{\Z}{\mathbb{Z}}
\newcommand{\R}{\mathbb{R}}
\newcommand{\spinc}{\operatorname{Spin}^c}
\newcommand{\arf}{\operatorname{Arf}}
\newcommand{\os}{Ozsv\'ath and Szab\'o}
\numberwithin{equation}{section}
\begin{document}

\title[Non-coherent band surgery and site-specific recombination]{Recent advances on the non-coherent band surgery model for site-specific recombination}
\author{Allison H. Moore}
\address{Department of Mathematics, University of California, Davis, One Shields Avenue, Davis, CA 95616}

\email{amoore@math.ucdavis.edu}
\author{Mariel Vazquez}
\address{Department of Microbiology and Molecular Genetics / Department of Mathematics, University of California, Davis, One Shields Avenue, Davis, CA 95616}

\email{mariel@math.ucdavis.edu}

\thanks{AM and MV were partially supported by DMS-1716987.  MV was also partially supported by CAREER Grant DMS-1519375 and DMS-1817156. }                                       

\subjclass[2010]{Primary 92E10 \and 57M25 \and 57M27}

\date{}

\begin{abstract} 
Site-specific recombination is an enzymatic process where two sites of precise sequence and orientation along a circle come together, are cleaved, and the ends are recombined. Site-specific recombination on a knotted substrate produces another knot or a two-component link depending on the relative orientation of the sites prior to recombination. Mathematically, site-specific recombination is modeled as coherent (knot to link) or non-coherent (knot to knot) banding. We here survey recent developments in the study of non-coherent bandings on knots and discuss biological implications.
\end{abstract}

%%%%%  10 to 25 pages long

\maketitle

\section{Introduction}
\label{sec:intro}

Local events where two linear segments along a curve in three-dimensional space come together and undergo cleavage followed by reconnection have been observed in a surprising variety of natural settings and at widely different scales. Examples range from microscopic site-specific recombination on circular DNA molecules, to reconnection of fluid vortices and magnetic reconnection in solar coronal loops \cite{Li2016, Kleckner2013aa, Shimokawa}. 

Enzymes in the family of site-specific recombinases mediate local reconnection on DNA molecules in order to address a variety of biological problems. We refer to the DNA molecule as a polymer chain, or simply a \emph{chain}. Because local reconnection reactions introduce two breaks and rearrange the strands of the chains involved, when acting on circular chains they can induce topological changes. In particular, they can resolve problematic knotting and linking that threatens the health of the cell. Experimental data show striking similarities between the pathways of topology simplification of newly replicated circular DNA plasmids \cite{Shimokawa, Stolz2017} by recombination and those of interlinked fluid vortices \cite{Kleckner, KKI, Laing}, thus pointing to a universal process of unlinking by local reconnection. 
This process has been the focus of numerous experimental and theoretical studies, e.g. \cite{Ip2003, Grainge2007, Shimokawa, BuckIshihara2015, BIRS, Stolz2017}.

Here, we are interested in \emph{site-specific recombination} on circular DNA molecules, where site-specific recombinases perform local reconnection on two short DNA segments with identical nucleotide sequences, the \emph{recombination sites} (reviewed in Section \ref{sec:ssr}). We model knotted and linked DNA molecules as topological knots and links in the three-sphere, and the recombination sites as two oriented segments on the knot or link (one in each component). Two recombination sites on a DNA knot are in direct repeat or in inverted repeat depending on whether their DNA sequences induce the same or opposite orientations into the knot, respectively (Figure \ref{fig:siteor}). 

Site-specific recombination on a knot is modeled by \emph{coherent} band surgery if the recombination sites are in direct repeats, and as \emph{non-coherent} band surgery if the sites are in inverted repeats. Non-coherent band surgery on a knot yields another knot, while coherent band surgery on a knot yields a link of two components. Band surgery along knots and links is defined in Section \ref{sec:bandsurgery}. Both operations have biological relevance. For example in \emph{Escherichia coli}, site-specific recombination is used to monomerize dimers that arise as products of homologous recombination on damaged replication links (reviewed in \cite{LeBourg}). Mathematically, in its simplest form, this is a coherent band surgery taking an unknot to an unlink. However, monomerizing DNA dimers can also refer to splitting a knot of length $l$ into a two-component link where each component is of length $l\over 2$. Coherent banding is also used by enzymes to integrate viral DNA into the genomes of their hosts, and by transposons, in a two-step reaction, to move pieces of DNA from one region of the genome to another. Non-coherent banding is observed in nature in processes that require inversion of a DNA segment. For example, recombinases Gin of bacteriophage Mu and Hin of \emph{Salmonella} recognize two specific sites in inverted repeats along a DNA molecule, and invert the segment of DNA between the sites. The action of these enzymes is reviewed in section \ref{sec:enzymes}. 

While the coherent band surgery model for site-specific recombination has received much attention in the literature, (e.g. \cite{ISV, BuckIshihara2015, BIRS, DIMS, Shimokawa}), non-coherent band surgery is less-understood and is intrinsically difficult from an analytical perspective. This is due to the implicit existence of a non-orientable surface induced by the banding. In Section \ref{sec:bandsurgery} we give an overview of non-coherent band surgery and mention several topological criteria for the existence of a banding relating a pair of knots. 

Our own approach to the study of non-coherent band surgery and its applications to site-specific recombination is two-fold: analytical, described in Section \ref{sec:new}; and numerical, described in Section \ref{sec:numerical}. Analytically, a band surgery operation relating a pair of knots is understood as a certain tangle replacement which may be ``lifted" to rational Dehn surgery in the branched double covers of the knots involved. The theory of Dehn surgery and knot invariants, together with some heavy machinery from the realm of Floer homology can then be used to study such operations. The goal in this approach is to produce theoretical results that can be used to obstruct, classify, or characterize band surgery operations. We refer the reader to Section \ref{sec:new}. 

The second facet of our approach, described in Section \ref{sec:numerical}, is computational. A band surgery operation relating a pair of knots can be realized as an operation on knotted polygonal chains embedded in three-dimensional space. Here we use BFACF, a well-known Markov Chain Monte Carlo algorithm, to uniformly sample random conformations of knotted self-avoiding polygons (SAPs) in the simple cubic lattice $\Z^3$  (reviewed in \cite{MadrasSlade}). Using \emph{Recombo}, our own software package, we algorithmically identify binding sites, perform recombination and identify the resulting product knots \cite{Stolz2017}. A robust statistical analysis of the resulting transition probability networks gives us insight into the quantitative behavior of topological simplification via recombination.

In sum, we here focus on the non-coherent band surgery model for site-specific recombination on circular DNA molecules at sites in inverted repeat. In Section \ref{sec:ssr}, we provide biological background, review site-specific recombination on circular DNA, and provide examples of these events in the biological literature. In Section \ref{sec:bandsurgery}, we define band surgery, and survey the current state of non-coherent band surgery results in low dimensional topology. In Section \ref{sec:new} we discuss recent progress by the authors in non-coherent band surgery resulting from related work in three-manifolds. In Section \ref{sec:numerical}, we give an overview of our recent numerical studies of non-coherent band surgery along knots in the simple cubic lattice.

\section{DNA recombination}
\label{sec:ssr}

\begin{figure}
\includegraphics[width=0.4\textwidth]{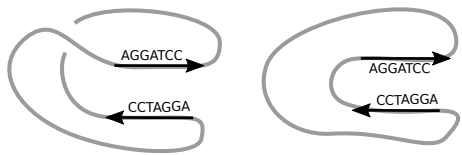}
\caption{Each figure illustrates a circular DNA molecule with two recombination sites. The DNA is modeled as the curve drawn by the axis of the double helix (see Section \ref{sec:ssr}). The two sites have the same nucleotide sequence, AGGATCC. Since the sequence is non-palindromic, one can assign an orientation to it. (Left) A pair of sites in inverted repeats, i.e. the sites induce opposite orientations into the circle (head-to-head). (Right) A pair of sites in direct repeats (head-to-tail).}
\label{fig:siteor}
\end{figure}

\subsection{DNA, damage and recombination.} 
The nucleic acids DNA and RNA are molecules that carry the genetic code of every organism. DNA is composed of two sugar-phosphate backbones flanked by nitrogenous bases A, G, T and C (adenine, guanine, thymine and cytosine). The backbones are held together by hydrogen bonds between each pair of complementary bases (A with T; G with C). The two backbones wrap around each other, forming the familiar right-handed double-helix. Words in the nucleotide sequence determine the genetic code. The genetic code is unique to each individual and can be used to differentiate a bacterium from a human, or two humans from each other. Double-stranded DNA molecules may be linear or circular. Most bacterial organisms have circular DNA. Length is measured by the number of base pairs (bp) of nucleotides, and varies widely across different organisms. For example, the chromosome of the bacterium {\it Escherichia coli} is circular, with length ranging from 4.5 to 5.5 million bp. The human genome is comprised of 23 chromosomal pairs of linear DNA molecules, and is approximately 3 billion bp long. Circular DNA molecules may trap interesting geometrical and topological information. For example, links naturally arise as products of DNA replication on circular DNA molecules. Replication is the process by which a cell copies its genetic code in preparation for cell division. Interlinking of chromosomes prevents the newly replicated DNA molecules from properly segregating at cell division. Enzymes that mediate local changes can correct for topological problems; local changes include strand-passage and reconnection, and have the potential to change the global topological type of a knotted or linked DNA chain. The action of these enzymes often leads to topological simplification (e.g. a reduction in minimal crossing number) of the substrate DNA knots and links \cite{Rybenkov1997, Zech, Grainge2007, LeBourg}. 

When exposed to sparsely ionizing radiation, or to some chemotoxic agents such as chemotherapy drugs, the DNA in cells is subjected to double-stranded breaks (DSBs). If not repaired properly, the DNA damage may result in misrejoinings that lead to large-scale genome rearrangements (reviewed in \cite{Hlatky}). Depending on the extent of the damage and on the phase of the cell cycle when it happens, DSBs are repaired by non-homologous end-joining (NHEJ) or by homologous recombination (HR). The two repair pathways have widely different levels of accuracy, HR being the most faithful. However, in both cases the process of DNA damage and repair starts with two DNA sites in close proximity that undergo cleavage. The loose ends are ideally put back together without any DNA loss or rearrangement. Occasionally the ends are resealed with the wrong partner thus leading to a chromosomal rearrangement. If the strands of DNA are swapped during HR, the resulting local reconnection event is referred to as a {\it cross-over}. Chromosomal rearrangements induced by DNA damage may lead to chromosomal instability and cell death. Chromosomal rearrangements, in particular DNA inversions, are also considered as drivers of genome evolution \cite{Wellenreuther, Zhang}.

Reconnection events mediated by site-specific recombinases are common in nature, and have been extensively studied from the knot theoretic perspective. During a site-specific recombination event the enzymes recognize and bind two recombination sites. The nucleotide sequence along the two sites is identical. The two sites may be found along one DNA molecule or two distinct molecules. Site-specific recombinases act by cleaving both segments, and then recombining and joining the ends. Because the sites targeted by site-specific recombinases are short (typically 5-50 bp) and the mechanism is highly specific, these enzymes are often used for genetic engineering.
They also are ubiquitous in the natural environment, and are involved in the integration or excision of genetic material \cite{gottesman1971}, dimer resolution \cite{Stirling1988}, or the inversion of genetic material to alter gene expression \cite{heichmanjohnson1990}. 

A recombination site is usually a short, non-palindromic word composed of letters A, T, C, G corresponding to its nucleotide sequence. Hence a given site can be assigned an unambiguous orientation (e.g., think of a `word' as being read left-to-right). If two sites appear along a single circular chain, they will induce either the same or different orientations along the chain. In the first case they are said to be in {\it direct repeats}. In the second case, they are said to be in {\it inverted repeats}. Figure \ref{fig:siteor} shows a schematic of two sites in inverted repeats and direct repeats, respectively, along a single circular molecule. 

\begin{figure}[b]
\includegraphics[width=0.6\textwidth]{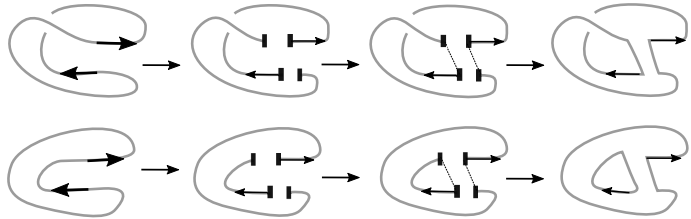}
\caption{(Top) The product of site-specific recombination at inversely repeated sites along a knot is a knot. (Bottom) The product of site-specific recombination at directly repeated sites along a knot is a two-component link.}
\label{fig:reconnection}
\end{figure}

We make the following simple combinatorial observation. Given a single component knot containing two sites in direct repeats, recombination will yield a two-component link, perhaps non-trivially linked. Given a single component knot containing two inversely repeated sites, the product will yield another single component knot, perhaps of a different knot type. See Figure \ref{fig:reconnection}. Conversely, given a two-component link with one site on each component, the product is necessarily a knot, now containing two directly repeated sites. These distinctions will become quite important when we discuss the different band surgery models in Section \ref{sec:bandsurgery}. In the biology literature, when two distinct molecules are fused into one molecule in the recombination process, the product is called a \emph{dimer}. Likewise, in biology two circular molecules linked together are called \emph{catenanes}.

\begin{figure}
\includegraphics[height=0.21\textwidth]{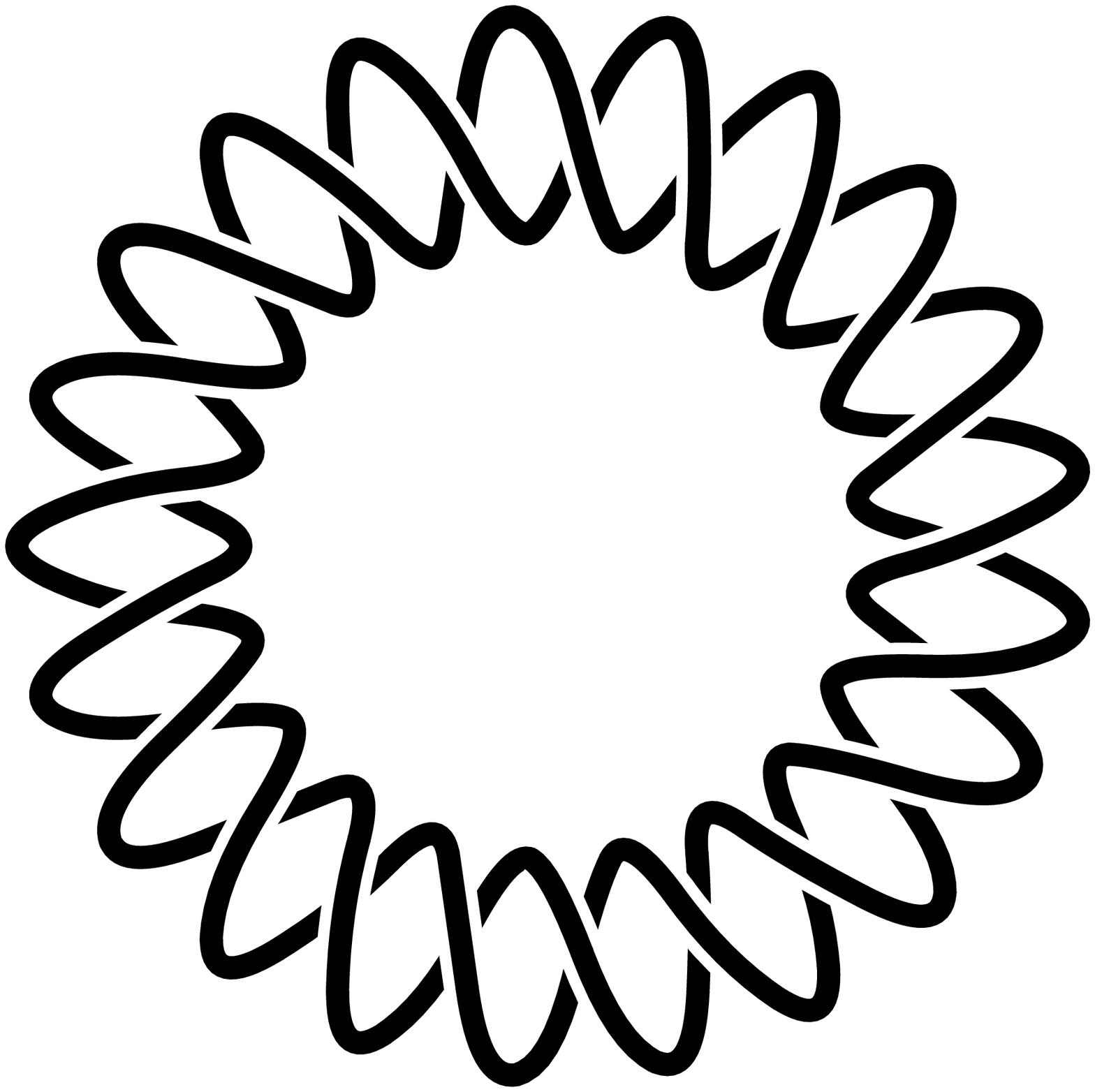} \qquad \includegraphics[height=0.18\textwidth]{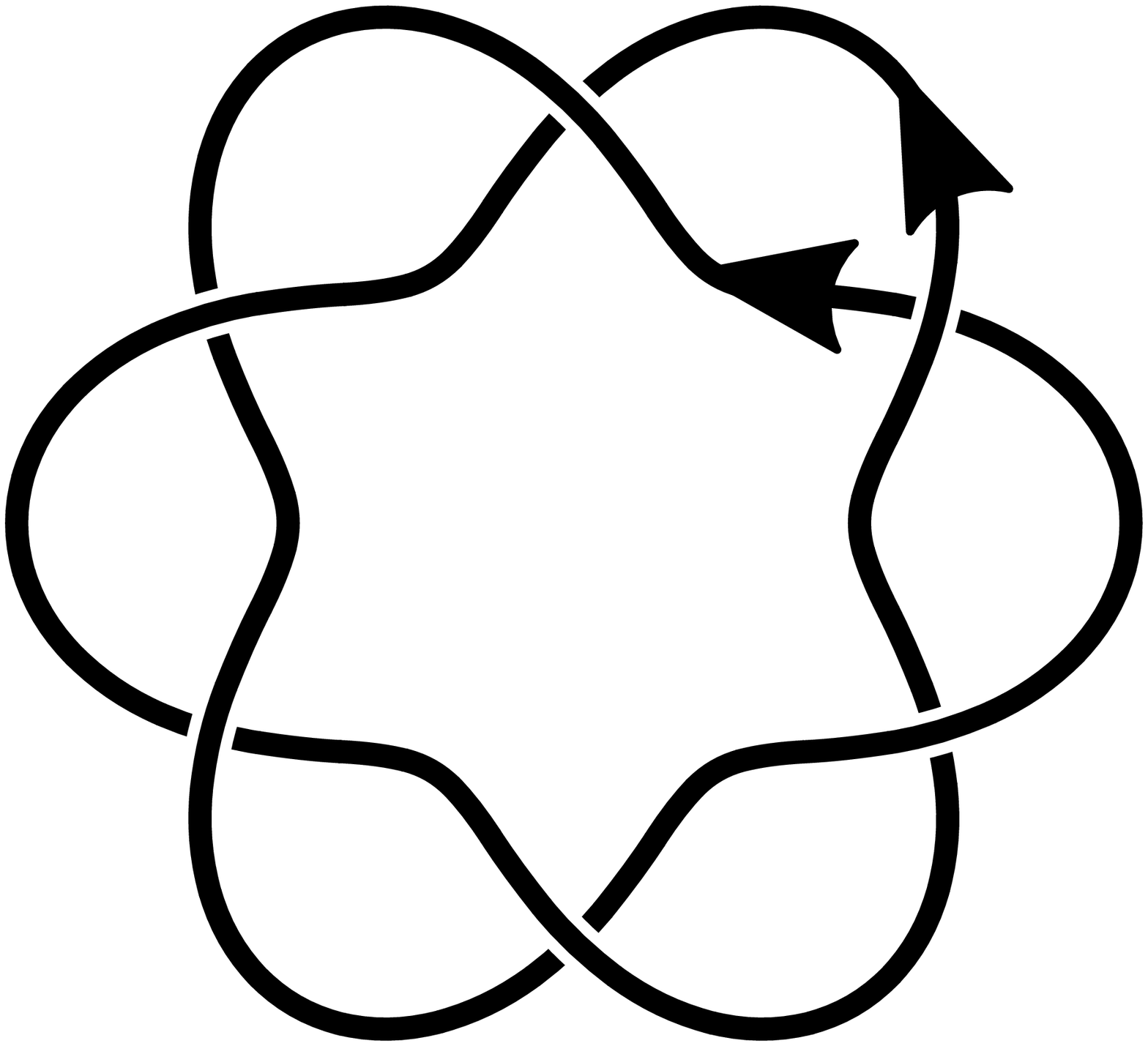} 
\caption{The image on the left shows an unoriented $T(2, 21)$ torus knot and the image on the right shows an oriented $T(2,6)=2^2_6$ torus link. The two sites indicated by arrows are said to be in parallel alignment. In the biology literature, $T(2,n)$ torus links are also called $n$-catenanes. This link illustrates one possible product of DNA replication on an unknotted substrate.}
\label{fig:torusknot}
\end{figure}

\subsection{The relevance of $T(2, n)$ torus knots and links in biology.} 
Certain classes of knots and links are especially relevant in the biological context for mechanistic reasons, for example two-bridge links and torus links. Consider the two interlinked backbones of a DNA double-helix. If the DNA is circular, the backbone naturally forms a $T(2,2n)$ right-handed torus link, where $n$ corresponds to the number of turns of the helix. See Figure \ref{fig:torusknot} for an example. During DNA replication, enzymes called helicases unwind and `unzip' the double-helix at the replication fork. The replication process yields two interlinked daughter chromosomes. The curves drawn by the axes of each DNA double helix form a two-component link that has been experimentally confirmed to be a $T(2, 2n)$ torus link \cite{AdamsCozz92}. Notice that the link type is a direct consequence of the right-handed double-helical structure of DNA. To ensure the survival of the cell line, these two components must become unlinked. DNA unlinking mediated by enzymes is reviewed in the next subsection

Knotting in long circular DNA molecules may arise as a result of enzymatic reactions, or as products of a random knotting process. The Frisch-Wasserman-Delbr\"{u}ck Conjecture \cite{FW, DM} asserts that in long polymer chains knotting will occur with near certainty. The conjecture is known to hold for various polymer models \cite{SW,D1,DPS} and has been experimentally verified on randomly circularized DNA chains \cite{Liu81, Rybenkov93, Shaw93, Arsuaga02}. Furthermore, when polymer chains are subjected to confinement, for example in a cell nucleus or viral capsid, the knotting probability and knot complexity appears to increase (e.g. \cite{Arsuaga02}). The most probable knot observed in random knotting processes is the trefoil knot $T(2, 3)$.

\subsection{Enzymes that change the topology of DNA}
\label{sec:enzymes}

This is a good time to interrupt the exposition in order to clarify some biological naming conventions for the mathematical reader. General classes of enzymes are usually suffixed by `-ase' (e.g. helicase, topoisomerase, recombinase). The name of a particular enzyme is capitalized (e.g. Gin, Hin, Xer, or $\lambda$-Int), and the name of a recombination site is indicated in italics (e.g. \emph{att, dif, psi, res}). 

In E. coli, topological simplification of replication links is mediated by type II topoisomerases. These are enzymes that cut one segment of double-stranded DNA, pass another duplex through the break, and then reseal the broken ends. Strand-passage mediated by type II topoisomerases is typically modeled by crossing changes on curves, linear or circular, representing the DNA.
Type II topoisomerases are ubiquitous and essential to life. They are believed to be the predominant decatenases in the cell (i.e. enzymes in charge of unlinking interlinked DNA molecules) \cite{Zech}. However, in a series of \emph{in vitro} experiments Ip {\it et al.} showed that site-specific recombinases can also mediate topological simplification \cite{Ip2003}. In particular, they demonstrated that the recombinases XerC/XerD, acting at \emph{dif}-sites, and coupled with an auxiliary protein FtsK, were capable of resolving plasmid substrates that had been linked by $\lambda$-Int, a site-specific recombinase from bacteriophage $\lambda$ \cite{BSC}. The same research group later showed that the XerCD-\emph{dif}-FtsK enzymatic complex could unlink replication links tied \emph{in vivo} in E. coli cells deficient in the decatenase topoIV \cite{Grainge2007}. Several groups of authors, including our own, studied mathematically the mechanism and pathways of DNA unlinking by site-specific recombination \cite{BuckIshihara2015, BIRS, Shimokawa, Stolz2017}. In \cite{Shimokawa} we showed that at least $2m$ steps are needed to unlink a replication link of type $T(2,2n)$ with one recombination site in each component in the orientation indicated in Figure \ref{fig:torusknot}. All the intermediate knots and links are also in the $T(2,n)$ family. Also, under the assumption that each step strictly introduces the minimal crossing number of the substrate, we concluded that there is a unique shortest pathway of unlinking a $T(2,2n)$ replication link. This minimal pathway corresponds to the one proposed by experimentalists \cite{Grainge2007, Ip2003}. In \cite{Stolz2017} we extended the mathematical model of \cite{Shimokawa} and also analyzed recombination pathways numerically. We identified minimal pathways of unlinking under a variety of biologically-motivated assumptions and used the transition probabilities obtained numerically to identify a most probable minimal pathway of DNA unlinking of replication links. In \cite{BuckIshihara2015}, the authors characterize coherent band pathways between knots and two-component links of small crossing number, and in \cite{BIRS}, they characterize band surgeries between fibered links in terms of arcs on the fiber surfaces of such links. The above-mentioned studies were all concerned with site-specific recombination modeled as coherent band surgery, and point to the $T(2, n)$ torus knots and links as having heightened importance in pathways of unlinking of replication links.

In the biological literature there are numerous other studies of site-specific recombinases acting at two sites in direct repeats along a single circle, or at two sites on separate circles. Several instances specific to reactions involving the trefoil knot and other $T(2, n)$ torus knots and links are mentioned in \cite[Section 5]{LMV}. Here, we review several more specific examples of site-specific recombination on DNA knots at sites in inverted repeats. Recall that in this case, the product of recombination is also a single component knot.

An \emph{inversion} is a chromosomal rearrangement in which a segment of a DNA molecule is reversed, so that the corresponding portion of the nucleotide sequence is read backwards. In addition to potential topological changes (i.e. a change in the isotopy type of a DNA knot), site-specific recombination at sites in inverted repeat will induce an inversion. In some organisms, inversions are potentially harmful genetic abnormalities. However, in some cases, inversions may present as advantageous mutations. They are also considered to be the most common rearrangement in evolution. 

Gin and Hin are examples of enzymes in the family of serine site-specific recombinases that mediate the inversion of a DNA segment by acting at sites in inverted repeats. Gin is a site-specific recombinase from bacteriophage Mu. Bacteriophages are viruses that infect bacteria. Bacteriophage Mu infects a wide range of bacterial strains. After finding its target cell, a bacteriophage injects its DNA into the host, and the phage DNA becomes circular. Gin recombination inverts the G-segment, a 3kb portion of the viral genome flanked by the \emph{gix} recombination sites. As a result of G-segment inversion, a gene encoding for a new set of tail fibers is expressed. Tail fibers are used by bacteriophages to recognize their host. In this way Gin site specific recombination determines the family of bacteria that will be infected by the new generation of bacteriophages. Gin is known to act processively, meaning it performs recombination several times successively. During the first round of recombination, it converts an unknotted DNA circle with sites in inverted repeat into another unknot, now with an inverted G-segment. In the second round, the unknot is converted into a trefoil, and the G-segment is flipped back to its original orientation. As the reaction proceeds, the family of twist knots is generated ($3_{1}, 4_{1},5_{2},6_{1},7_{2}É$). From a topological perspective, the second step is a non-coherent banding relating an unknot to a trefoil knot, and a third step is a non-coherent banding relating the trefoil with the figure 8 knot ($4_{1}$). In Section \ref{sec:bandsurgery} we discuss this specific instance of non-coherent band surgery.

Another notable example is the Hin site-specific recombination system from \emph{Salmonella}. \emph{Salmonella} sp. are bacteria in the family Enterobacteriaceae. The cells are rod-shaped with lengths $2-5\mu m$. A few flagella used for locomotion are arranged on the cell surface. Salmonella cells express two different flagelar proteins in a process called flagelar phase variation. Expression of one protein over the other results in a change of phenotype. Diphasic agglutination was first observed in Salmonella in 1922 by F.W. Andrewes \cite{Andrewes}. Half a century later it was shown that the observed phenotypic changes were due to an inversion of a genomic region affecting flagellin genes. See \cite{Henderson} for a review of bacterial phase variation. The site-specific recombinase Hin inverts a 1kb DNA segment in the \emph{Salmonella} sp. chromosome. The invertible segment includes a promoter region for one flagellin gene and a repressor for another flagellin gene. Inversion by Hin selectively determines the expression of one gene and the repression of the other. In \cite{MerickelJohnson} experiments were conducted \emph{in vivo} and \emph{in vitro} to study the topological mechanism of action of Hin recombinase acting at \emph{hix} sites. Like Gin, when acting on unknotted DNA circles with two sites in inverted repeats, Hin recombination produces twist knots. Experiments pointed to a mechanism analogous to that of Gin. See \cite{Johnson2015} for a review of inversion by serine recombinases. Mathematical models of Gin and Hin recombination have been reported in \cite{BuckMauricio, CH-1, CH-2, VazquezSumners, VazquezThesis}.

\section{The band-surgery model for site-specific recombination}
\label{sec:bandsurgery}

\subsection{Knot theory background}
\label{knot}

A \emph{knot} $K$ is an embedding of the circle into $\R^3$ or its compactification, the three-sphere $S^3$. A \emph{link} $L$ is a disjoint union of knots, possibly linked together. A knot is a link with one component. Two links are considered to be equivalent if one can be continuously deformed into the other via an ambient isotopy of the surrounding space. We will further assume that all links are tame, meaning each component can be represented by piecewise linear curves with finitely many segments. It is common to illustrate links with link diagrams, which are planar projections containing the data of over-crossings and under-crossings. A link which is isotopic to its mirror image is called an \emph{achiral} link, otherwise it is \emph{chiral}. 
Chirality and orientation are important in biology. When distinguishing between different symmetries of links we follow the nomenclature proposed in \cite{Brasher, Witte}. Because we will mainly deal with non-coherent band surgery, unless otherwise noted, here we take knots and links as unoriented objects. 

\subsection{Band surgery}
\begin{figure}
	\includegraphics[width = 0.6\textwidth]{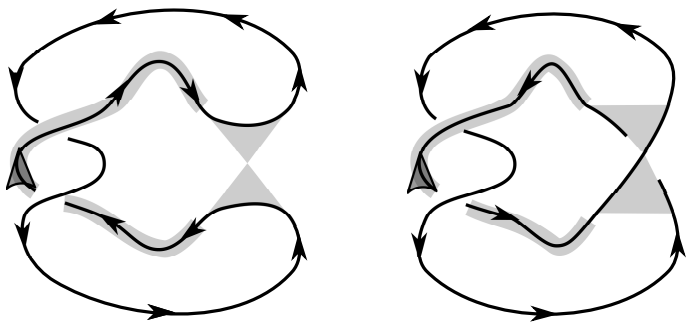}
	\caption{Notice that when a banding takes a knot to another knot, it is not possible to orient the knots in such a way that the orientations are consistent with the band surgery. Observe that the non-coherent band surgery is an inversion of the inner-most arc indicated in grey.}
	\label{orientationmismatch}
\end{figure}

Site-specific recombination on circular DNA will be modeled by a topological operation on knots and links which we now describe. Let $L$ denote a link in $S^3$. A band $b$ is an embedding of the unit square, $b(I\times I)\hookrightarrow S^3$, such that two opposing edges lie on $L$, i.e. $L\cap b(I\times I) = b(I \times \partial I)$. We say that two links $L$ and $L'$ are related by a \emph{band surgery} if 
\[
	L' = ( L - b(I\times \partial I) ) \cup  b(\partial I \times I).
\] 
In other words, given a link $L$ and a band $b$, we replace the two opposing edges of the band on $L$ with the remaining two edges to obtain a new link $L'$.

If $L$ and $L'$ are oriented links, and the orientation of $L - b(I\times\partial I)$ agrees with the specified orientations on $L$ and $L'$, then the band surgery is called \emph{coherent} (meaning it is coherent with respect to orientation). Otherwise, the band surgery is \emph{non-coherent}. 
Elsewhere in the literature, coherent band surgery is sometimes called nullification \cite{EM}, and non-coherent band surgery is called an H(2)-move, incoherent, or unoriented surgery \cite{ AbeKanenobu, Kanenobu, KanenobuMiyazawa}. A band surgery relating a pair of links induces a surface cobordism between them. A surface cobordism from $L$ to $L'$ is a surface $F$ with $\partial F = L\sqcup L'$ properly embedded in $S^3 \times I$. Band surgery induces a cobordism by attaching a single 1-handle to the surface $L \times I$ in $S^3 \times I$. In the case of coherent band surgery, the surface is oriented and the restriction to the boundary respects the given orientations on $L$ and $L'$. In the case of non-coherent band surgery, the addition of the band makes the surface non-orientable, as it contains an embedded M\"{o}bius strip. 

Let us make a similar combinatorial observation to the one we made in the case of site-specific recombination. Suppose that a band surgery along a knot yields another single component knot. This implies that the surface cobordism induced by the banding contains only two boundary components. In particular, it is non-orientable and therefore the knots cannot both be oriented in a way that is compatible with the banding. See Figure \ref{orientationmismatch}. Thus a band surgery which takes a knot to another knot is necessarily non-coherent. Similarly, a coherent band surgery operation along a knot will always produce a two-component link.  The local action of site-specific recombinases is thought of as a simple reconnection. This is modeled mathematically as a band surgery. The appropriate model for site-specific recombination is coherent band surgery when the sites are directly repeated, and is non-coherent band surgery when the sites are inversely repeated.

\begin{figure}
\includegraphics[width = 0.5\textwidth]{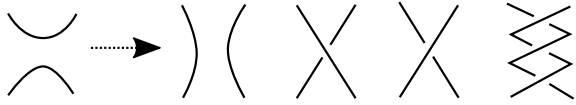}
\caption{Some of the standard tangle replacements used to model reconnection.}
\label{fig:tangle}
\end{figure}

\subsection{Band surgery via tangle replacements.} 
It is often convenient to view a band surgery between a pair of knots or links in terms of a standard diagrammatic operation. More precisely, after isotoping the links $L$ and $L'$, perhaps at the expense of complicating their embeddings, we may assume that a coherent or non-coherent band surgery is obtained from a particular two-string tangle replacement. 

A \emph{two-string tangle} is a pair $(B, t)$, where $B$ is a three-ball and $t$ is a pair of properly embedded arcs in $B$. Every link in $S^3$ can be written as the union of two complementary two-string tangles. In particular, we may describe the links $L$ and $L'$ in $S^3$ that are related by band surgery via the following two-string tangle decompositions
\begin{equation}
\label{tangledecomp}
	(S^3, L)= (B_o, t_o) \cup (B, t) \text{  and } (S^3, L')= (B_o, t_o) \cup (B, t'),
\end{equation}
where $S^3 = B_o\cup B$ and the sphere $\partial B = \partial B_o$ intersects $L$ and $L'$ transversely in four points. The pairs of properly embedded arcs are $t=(B \cap L)$, $t'=(B \cap L')$ and  $t_o = (B_o\cap L) = (B_o\cap L')$. To define a band surgery operation, the tangle $(B, t)$ is replaced with $(B, t')$. By convention, we assume that $(B, t)$ is a rational $(0)$ tangle and that $(B, t')$ is either an $(\infty)$ or $(\pm 1/n)$ tangle, as shown in Figure \ref{fig:tangle}. This convention is based on the assumption that the arcs $t$ in the tangle $(B, t)$ represent only the cleavage region of each recombination site. Since the arcs are very short DNA segments, and since DNA is a stiff biopolymer, then $(B, t)$ and $(B, t')$ can be assumed to be trivial, or very simple tangles \cite{Shimokawa, Vazquez2005}. The tangle $(B_o, t_o)$ is fixed in the operation, and is shared by both of $L$ and $L'$. Notice that subscript `o' stands for `outside' tangle, and this tangle may become arbitrarily complicated in the isotopy that simplifies the embedding of the band. In particular, $(B_o, t_o)$ need not be rational. See Figure \ref{bandisotopy} for an example.

\begin{figure}
\includegraphics[width = 0.6\textwidth]{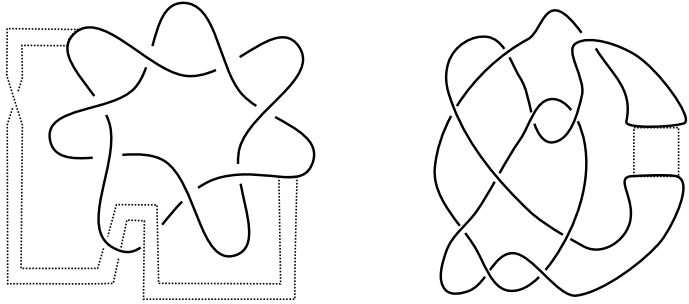}
\caption{A non-coherent band surgery, indicated by the dashed band, relates the knots $7_1 = T(2,7)$ and $5^*_2$, the mirror of $5_2$. An isotopy that simplifies the embedding of the band takes the left image to the right image. After isotopy, the outside tangle becomes complicated.}
\label{bandisotopy}
\end{figure}

\subsection{Dehn surgery and branched double covers.}
\emph{Dehn surgery} refers to a surgical operation that transforms a 3-manifold $M$ into another 3-manifold, in which an open tubuluar neighborhood of a knot or link is removed, and solid tori are glued along the boundary components \cite{Gordon}. 
More precisely, let $M$ be a compact, connected, irreducible, oriented 3-manifold with $\partial M$ a torus. A \emph{slope} is the isotopy class of an unoriented simple closed curve on the bounding torus. Given a pair of slopes $r$, $s$ the \emph{distance} $\Delta(r, s)$ refers to their minimal geometric intersection number. For any slope $r$, a closed 3-manifold $M(r)$, called a \emph{Dehn filling} of $M$, can be constructed by gluing a solid torus to $M$ by a homeomorphism which identifies a meridional curve of the solid torus to the slope $r$ on $\partial M$. For example, consider the case that $M$ is a knot complement in $S^3$. It is well known that the extended rationals $\mathbb{Q}\cup\{\infty\}$ parameterize slopes on $\partial M$. See \cite{Gordon} for a review of Dehn surgery. 

The perspective that band surgery is a two-string tangle replacement adheres to the now classical notion that the action of enzymatic complexes on circular DNA is appropriately modeled by tangle surgery, due to Ernst and Sumners in 1990 \cite{ES-1990}, reviewed in \cite{M}. When topoisomerases or recombinases act locally, they essentially split the DNA molecule into two complementary tangles, replacing one tangle with another. Viewing this operation as a topological tangle surgery opens the door to a broad range of tools in low-dimensional topology that can be used to understand these enzymatic actions. Many of the most successful topological approaches in the study of band surgery go by way of the \emph{branched double cover} of a tangle or knot complement, starting with work of Lickorish \cite{Lickorish}. Here, the branched double cover of the pair $(S^3, K)$ denoted by $\Sigma_2(S^3, K)$, is the unique closed, connected, oriented 3-manifold associated with the representation of the knot group onto $\Z/2\Z$ (c.f. \cite{Rolfsen}). Similarly, the branched double cover of a tangle $(B, t)$ is a compact, connected, oriented three-manifold with torus boundary (see,  for example \cite{BZ} or \cite[Lecture 4]{Gordon}).

The classic approach relies on combining important theorems in low-dimensional topology with the characterization of rational tangles as those with branched double covers that are solid tori. In particular, Ernst and Sumners showed that when the tangles involved are known to be rational, the Cyclic Surgery Theorem \cite{CGLS}, applied in the branched double cover, can be used to determine the types of tangles that satisfy the equations that model enzymatic actions \cite{ES-1990}.
This approach to studying the mechanism of enzymatic action is called the \emph{tangle calculus}, and it effectively reduces the problem to the straight-forward, if tedious, combinatorics of rational tangles (see \cite{ES-1990, Ernst-1, Ernst-2, ES-1999} for an extended discussion of rational tangles and the tangle method). However, there is one major limitation of tangle calculus in the sense that one must know, or assume, that the tangles involved are Montesinos tangles, i.e.  sums of rational tangles. While often biologically justified, from the topological perspective the assumption is rather restrictive. 

Another topological strategy in the study of band surgery involves analyzing surfaces bounded by the links involved. This is done in \cite{IshiharaShimokawa, BuckIshihara2015, BIRS, DIMS, Shimokawa}, for example. The essence of the approach is as follows: if a band surgery decreases the Euler characteristic of an oriented surface without closed components bounding the link, then the band can be isotoped to lie onto the Seifert surface (see for example \cite{ST} or \cite[Theorem 1.6]{HS}). If the minimizing Seifert surfaces and their genera are known, this can be used to obstruct the existence of coherent bandings, analyze nullification pathways, or characterize the tangle decompositions that yield such transitions. Here too, there are several limitations to the strategy. The first limitation is that the surface cobordism induced by the banding must be orientable, making this strategy only applicable in the case of coherent band surgery. The second limitation is that minimizing Seifert surfaces can be difficult to determine. Thus, many studies are restricted to special cases, e.g. between two-bridge knots and $T(2, n)$ torus knots \cite{DIMS} amongst fibered knots \cite{BIRS}, low crossing knots \cite{IshiharaShimokawa, BuckIshihara2015}, or make strong assumptions on topological complexity, as in \cite{Shimokawa}.

\subsection{Knot invariants and non-coherent band surgery.} 
Several groups of authors have looked at the behavior of knot invariants under non-coherent band surgery. The study of band surgery as an operation transforming a knot into other knot goes back to the 1980s when Lickorish asked whether a knot could be unknotted by adding a twisted band, and gave a criterion for this in terms of the linking form of the branched double cover \cite{Lickorish}. His approach suffices to show, for example, that the figure eight knot cannot be unknotted with a single twisted band. In fact, many of the results on non-coherent band surgery involve the branched double cover of the knot. Let $e_p$ denote the minimum number of generators of the homology group $H_1(\Sigma_p(L); \Z)$, where $\Sigma_p(L)$ is the $p$-fold branched cyclic cover. In \cite{AbeKanenobu}, the authors observed that work of Hoste, Nakanishi, and Taniyama \cite[Theorem 4]{HNT} shows the following.
\begin{proposition}\cite[Lemma 5.1]{AbeKanenobu}
\label{prop:gens}
If two knots $K$ and $K'$ are related by a single non-coherent band surgery, then $|e_p(K)-e_p(K')|\leq p-1$. 
\end{proposition}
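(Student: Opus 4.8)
The plan is to realize the non-coherent band surgery as a rational tangle replacement, lift that replacement to the $p$-fold cyclic branched covers, and then estimate the minimal number of generators of $H_1$ homologically. First I would invoke the tangle-replacement description of Section~\ref{sec:bandsurgery} to write $(S^3,K)=(B_o,t_o)\cup(B,t)$ and $(S^3,K')=(B_o,t_o)\cup(B,t')$, where $(B,t)$ is the trivial $(0)$-tangle, $(B,t')$ is an $(\infty)$- or $(\pm 1/n)$-tangle, and the outside tangle $(B_o,t_o)$ is shared. Since $K$ and $K'$ are knots, the surjections $H_1(S^3\setminus K)\to\Z/p$ and $H_1(S^3\setminus K')\to\Z/p$ exist and restrict to the \emph{same} $\Z/p$-coloring on $B_o\setminus t_o$ and to compatible colorings on $B\setminus t$, $B\setminus t'$, and on the branching sphere $S=\partial B=\partial B_o$ with its four marked points $S\cap K=S\cap K'$. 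Consequently the $p$-fold cyclic branched covers decompose as
\[
\Sigma_p(K)=V\cup_{\widetilde S} H,\qquad \Sigma_p(K')=V\cup_{\widetilde S} H',
\]
where $V=\Sigma_p(B_o,t_o)$ is common to both, $\widetilde S$ is the $p$-fold cyclic branched cover of $(S,S\cap K)$, and $H=\Sigma_p(B,t)$, $H'=\Sigma_p(B,t')$.

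The next step is a covering-space computation. The local monodromies around the four marked points of $S$ are $\pm1\in\Z/p$, hence generate $\Z/p$, so $\widetilde S$ is connected; Riemann--Hurwitz gives $\chi(\widetilde S)=2p-4(p-1)=4-2p$, i.e.\ $\widetilde S$ is a closed orientable surface of genus $p-1$. Because the trivial, the $(\infty)$, and the $(\pm 1/n)$ tangles are all homeomorphic as pairs to $(D^2\times I,\{\text{two points}\}\times I)$, each of $H$ and $H'$ is the thickening of a $p$-fold cyclic branched cover of a disk over two fully ramified points, hence a handlebody; its Euler characteristic is $p\cdot 1-2(p-1)=2-p$, so it is a handlebody of genus exactly $p-1$, matching $\widetilde S=\partial V$. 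Attaching $H$ to $V$ along $\widetilde S$ therefore amounts to attaching $p-1$ two-handles along a cut system $\gamma_1,\dots,\gamma_{p-1}\subset\partial V$ followed by a single three-handle; since a three-handle does not affect $H_1$,
\[
H_1(\Sigma_p(K);\Z)\cong H_1(V;\Z)\big/\langle[\gamma_1],\dots,[\gamma_{p-1}]\rangle,
\]
and likewise $H_1(\Sigma_p(K');\Z)\cong H_1(V;\Z)/\langle[\gamma'_1],\dots,[\gamma'_{p-1}]\rangle$ for the corresponding meridians of $H'$.

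Finally I would apply the elementary fact that, for a finitely generated abelian group $M$ and elements $x_1,\dots,x_k\in M$,
\[
\mu(M)-k\ \le\ \mu\!\left(M/\langle x_1,\dots,x_k\rangle\right)\ \le\ \mu(M),
\]
where $\mu(\cdot)$ denotes the minimal number of generators: the right inequality is immediate, and the left follows by lifting a minimal generating set of the quotient to $M$ and adjoining $x_1,\dots,x_k$. Taking $M=H_1(V;\Z)$ and $k=p-1$ gives $\mu(H_1(V;\Z))-(p-1)\le e_p(K)\le\mu(H_1(V;\Z))$, and the identical pair of inequalities holds for $e_p(K')$ because the same $V$ appears in both branched covers. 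Hence $e_p(K)$ and $e_p(K')$ lie in a common closed interval of length $p-1$, so $|e_p(K)-e_p(K')|\le p-1$. (This is, in effect, the content of \cite[Theorem 4]{HNT} in the present situation, as observed in \cite[Lemma 5.1]{AbeKanenobu}.)

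I expect the main obstacle to be the middle step: carefully setting up the branched covers so that $\Sigma_p(K)$ and $\Sigma_p(K')$ genuinely share the piece $V$, tracking the $\Z/p$-monodromy around the four branch points so that $\widetilde S$ is connected of the stated genus, and verifying via Riemann--Hurwitz that the branched cover of a rational-tangle ball is a handlebody of genus precisely $p-1$ — equivalently, that inserting $H$ costs exactly $p-1$ two-handles. Once this bookkeeping is in place, the homological estimate is purely formal.
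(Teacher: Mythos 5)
The paper itself offers no proof of this proposition --- it is quoted verbatim from Abe--Kanenobu, who in turn deduce it from Hoste--Nakanishi--Taniyama --- so your argument must stand on its own, and it has a genuine gap at the one step you flagged only as ``bookkeeping'': the claim that the two $\Z/p$-colorings \emph{restrict to the same coloring} on $B_o\setminus t_o$, so that $\Sigma_p(K)$ and $\Sigma_p(K')$ share the piece $V=\Sigma_p(B_o,t_o)$. This is false for $p\ge 3$, and the failure is precisely the non-coherence. Label the four points of $\partial B\cap K$ as $P_1,\dots,P_4$; for both $t_o\cup t$ and $t_o\cup t'$ to be knots, $t$, $t'$ and $t_o$ must realize the three distinct pairings of these points. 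Tracing the two closures, one finds that an orientation of $K$ and an orientation of $K'$ induce orientations on the two arcs $a_1,a_2$ of $t_o$ that agree on one arc and \emph{disagree} on the other (this is exactly the orientation mismatch of Figure \ref{orientationmismatch}). Writing $H_1(B_o\setminus t_o)\cong\Z\mu_1\oplus\Z\mu_2$ for the arc meridians, the coloring induced by $K$ sends $(\mu_1,\mu_2)\mapsto(1,1)$ while the one induced by $K'$ sends $(\mu_1,\mu_2)\mapsto(1,-1)$, up to an overall unit of $\Z/p$. For $p\ge 3$ these homomorphisms have different kernels, hence determine genuinely different $p$-fold covers of the outside tangle; there is no single manifold $V$ appearing in both decompositions, and the comparison of $e_p(K)$ and $e_p(K')$ through a common group $H_1(V;\Z)$ collapses. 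For $p=2$ the two colorings coincide since $+1\equiv-1$, your argument is exactly the Montesinos trick, and it correctly yields $|e_2(K)-e_2(K')|\le 1$.

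The surrounding computations are fine as far as they go: Riemann--Hurwitz does give $\widetilde S$ genus $p-1$, the branched cover of a trivial tangle is a genus-$(p-1)$ handlebody, and the inequality $\mu(M)-k\le\mu(M/\langle x_1,\dots,x_k\rangle)\le\mu(M)$ together with the ``common interval of length $p-1$'' conclusion is correct formal algebra --- but all of it is downstream of the shared-$V$ claim. To repair the proof you need a comparison that tolerates the sign flip on one meridian. The standard route (and, in effect, the one behind \cite[Theorem 4]{HNT}) is to compare \emph{presentations} of $H_1(\Sigma_p(K);\Z)$ and $H_1(\Sigma_p(K');\Z)$ obtained by Reidemeister--Schreier/Fox calculus from diagrams of $K$ and $K'$ that differ only inside $B$: the local tangle replacement changes the presentation matrix by a bounded number of rows and columns (on the order of $p-1$), and the generator-count inequality is then applied at the level of presentations rather than via a geometric gluing. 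As written, your proof establishes the proposition only for $p=2$.
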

Abe and Kanenobu also showed that certain evaluations of the Jones polynomial and Q-polynomial may be used to determine whether knots or links are related by a band surgery (either coherent or non-coherent). In the following theorem, $\omega=e^{i\pi/3}$ and $V(L; \omega)$ denotes the evaluation of the Jones polynomial $V(L; t)$ at $t^{1/2} =  e^{i\pi/6}$.
\begin{theorem}\cite[Theorem 5.5]{AbeKanenobu}
\label{thm:jones}
If two links $L$ and $L'$ are related by a single coherent or non-coherent band surgery, then
\[
	|V(L;\omega) / V(L';\omega)| \in \{ 1, \sqrt{3}^{\pm1} \}.
\]
\end{theorem}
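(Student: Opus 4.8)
\textit{Proof proposal.} The plan is to deduce the statement from a classical evaluation of the Jones polynomial at $\omega = e^{i\pi/3}$, due to Lickorish and Millett, together with the ``band surgery lifts to Dehn surgery'' dictionary for branched double covers. Write $d(L) = \dim_{\F_3} H_1(\Sigma_2(L); \F_3)$ for the $\F_3$-rank of the first homology of the double branched cover of $L$, and let $c(L)$ be the number of components of $L$. The Lickorish--Millett formula asserts that
\[
	V(L;\omega) = \pm\, i^{\,c(L)-1}\,(i\sqrt{3})^{\,d(L)}
\]
(the precise sign and power of $i$ will be irrelevant), so that $|V(L;\omega)| = \sqrt{3}^{\,d(L)}$. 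In particular $V(L;\omega) \neq 0$ for every link $L$, so the ratio in the statement makes sense, and
\[
	\left| \frac{V(L;\omega)}{V(L';\omega)} \right| = \sqrt{3}^{\;d(L)-d(L')}.
\]
It therefore suffices to prove $|d(L)-d(L')|\le 1$ whenever $L$ and $L'$ differ by a single band surgery, with no reference to orientations --- which is why the same bound covers the coherent and non-coherent cases at once.

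Next I would pass to branched double covers. By the discussion in Section~\ref{sec:bandsurgery}, after an isotopy the band surgery relating $L$ and $L'$ is realized as a rational tangle replacement: there are decompositions $(S^3,L) = (B_o,t_o)\cup(B,t)$ and $(S^3,L') = (B_o,t_o)\cup(B,t')$ as in \eqref{tangledecomp}, where $(B,t)$ is the $(0)$ tangle and $(B,t')$ is one of the $(\infty)$ or $(\pm 1/n)$ tangles --- in every case a \emph{rational} tangle. Taking branched double covers and using that the cover of a rational tangle is a solid torus while the cover of $(B_o,t_o)$ is a compact connected oriented manifold $M$ with torus boundary $T$, we obtain $\Sigma_2(L) = M\cup_T V$ and $\Sigma_2(L') = M\cup_T V'$ for solid tori $V,V'$. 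Hence $\Sigma_2(L)$ and $\Sigma_2(L')$ are two Dehn fillings of the same $M$; equivalently, $\Sigma_2(L')$ is obtained from $\Sigma_2(L)$ by a Dehn surgery along the knot $\widetilde K \subset \Sigma_2(L)$ that is the core of $V$.

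The final ingredient is a homological lemma: if closed oriented $3$-manifolds $Y$ and $Y'$ are related by Dehn surgery along a knot, then $|\dim_{\F_3} H_1(Y;\F_3) - \dim_{\F_3} H_1(Y';\F_3)|\le 1$. Let $E$ be the exterior of the knot, so that $Y$ and $Y'$ are the fillings of $E$ along slopes $r$ and $r'$. A Mayer--Vietoris computation with $\F_3$ coefficients (using that the filling slope bounds a disk in the attached solid torus) identifies $H_1(Y;\F_3)$ with the quotient of $H_1(E;\F_3)$ by the single class $[r]$, and similarly $H_1(Y';\F_3) \cong H_1(E;\F_3)/\langle [r']\rangle$. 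Quotienting a finite-dimensional $\F_3$-vector space by one element changes its dimension by $0$ or $1$, so each of $\dim_{\F_3} H_1(Y;\F_3)$, $\dim_{\F_3} H_1(Y';\F_3)$ lies within $1$ of $\dim_{\F_3} H_1(E;\F_3)$, and the triangle inequality finishes it. Applying the lemma to $Y=\Sigma_2(L)$ and $Y'=\Sigma_2(L')$ gives $|d(L)-d(L')|\le 1$, and the theorem follows. (This runs in the same spirit as Proposition~\ref{prop:gens}, now with $\F_3$ coefficients and for arbitrary links.)

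The one step that is not bookkeeping is the Lickorish--Millett evaluation; I would cite it rather than reprove it, since its proof --- an induction via the Jones skein relation at the special value $t=e^{i\pi/3}$, where $t^{1/2}-t^{-1/2}=i$ --- is a small argument in its own right. Beyond that, the only place calling for genuine care is the normalization around \eqref{tangledecomp}: one must know that an arbitrary band surgery can be arranged so that \emph{both} replaced tangles are rational, which is exactly what guarantees that both branched double covers are honest Dehn fillings of one manifold $M$ and lets the Dehn-surgery machinery apply.
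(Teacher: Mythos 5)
Your proposal is correct, and it follows essentially the route the paper indicates: the paper does not prove this theorem (it is quoted from Abe--Kanenobu), but immediately after stating it the paper points to the Lickorish--Millett evaluation $V(L;\omega)=\pm i^{c-1}(i\sqrt{3})^{\delta(L)}$ with $\delta(L)=\dim H_1(\Sigma_2(S^3,L);\Z/3\Z)$ as the underlying mechanism, which is exactly your first step. Your completion of the argument --- realizing the banding as a rational tangle replacement so that $\Sigma_2(L)$ and $\Sigma_2(L')$ are Dehn fillings of a common manifold, and then using Mayer--Vietoris to get $|\delta(L)-\delta(L')|\le 1$ --- is sound and supplies the part the survey leaves implicit.
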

The appearance of the Jones polynomial in Theorem \ref{thm:jones} is intrinsically related to the role of the branched double cover of $K$ in band surgery. To see this, consider the following statement of Lickorish and Millet \cite[Theorem 3]{LickorishMillett}, which states that this particular evaluation of the Jones polynomial is in fact related to the dimension of $H_1(\Sigma_2(S^3, K);\Z/3\Z)$:
\[
	V(L;\omega) = \pm i^{c-1}(i\sqrt{3})^{\delta(L)}. 
\]
Here, $c$ is the number of link components and $\delta(L)= \dim H_1(\Sigma_2(S^3, K);\Z/3\Z)$. A similar statement also holds for evaluations of the Q-polynomial at $-\bar{\phi}$, where $\bar{\phi}$ is the conjugate of the golden ratio, $\bar{\phi}=(1-\sqrt{5})/2$.
\begin{theorem}\cite[Theorem 5.5]{AbeKanenobu}
\label{thm:q}
If two links $L$ and $L'$ are related by a single coherent or non-coherent band surgery, then
\[
	|Q(L;-\bar{\phi}) / Q(L';-\bar{\phi})| \in \{ \pm 1, \sqrt{5}^{\pm1} \}.
\]
\end{theorem}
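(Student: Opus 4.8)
The plan is to run, \emph{mutatis mutandis}, the argument behind Theorem~\ref{thm:jones}, with the prime $3$ replaced by $5$. The input is the $Q$-polynomial analogue of the Lickorish--Millett evaluation alluded to just above Theorem~\ref{thm:q}: there is a formula
\[
 Q(L;-\bar\phi) = \pm\,\sqrt{5}^{\,\delta_5(L)}, \qquad \delta_5(L) := \dim H_1(\Sigma_2(S^3, L);\Z/5\Z),
\]
so that $|Q(L;-\bar\phi)| = \sqrt{5}^{\,\delta_5(L)}$, because $-\bar\phi$ is real and $Q$ has integer coefficients. Granting this, Theorem~\ref{thm:q} reduces to the topological statement that $|\delta_5(L) - \delta_5(L')| \le 1$ whenever $L$ and $L'$ are related by a single band surgery (coherent or non-coherent): then the ratio of absolute values equals $\sqrt{5}^{\,\delta_5(L)-\delta_5(L')} \in \{\sqrt{5}^{-1}, 1, \sqrt{5}\}$, which is the asserted set, the overall sign being immaterial once absolute values are taken.

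To establish $|\delta_5(L) - \delta_5(L')| \le 1$, I would pass to the branched double cover. Using the tangle decompositions of (\ref{tangledecomp}), write $(S^3, L) = (B_o, t_o)\cup(B, t)$ and $(S^3, L') = (B_o, t_o)\cup(B, t')$, where $(B, t)$ is the rational $(0)$-tangle and $(B, t')$ is the replacement rational tangle (an $(\infty)$ or $(\pm 1/n)$ tangle, by our standing convention). Passing to branched double covers, the sphere $\partial B = \partial B_o$ lifts to a torus, since it meets the link transversely in four points; and since the branched double cover of a rational tangle is a solid torus, one obtains $\Sigma_2(S^3, L) = X \cup_{T^2} V$ and $\Sigma_2(S^3, L') = X \cup_{T^2} V'$, where $X := \Sigma_2(B_o, t_o)$ is a fixed compact orientable $3$-manifold with torus boundary and $V, V'$ are solid tori glued to it. Hence $\Sigma_2(S^3, L')$ is obtained from $\Sigma_2(S^3, L)$ by a single Dehn surgery along the core of $V$.

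It remains to observe that a single Dehn surgery changes the dimension of $H_1(\cdot\,;\F)$ by at most one, over any field $\F$. A Mayer--Vietoris computation for $X \cup_{T^2} V$ yields $H_1(\Sigma_2(S^3, L);\F) \cong H_1(X;\F)/\langle s \rangle$ and likewise $H_1(\Sigma_2(S^3, L');\F) \cong H_1(X;\F)/\langle s' \rangle$, where $s$ and $s'$ are the two filling slopes pushed into $X$. Thus $\dim_\F H_1(\Sigma_2(S^3, L);\F)$ and $\dim_\F H_1(\Sigma_2(S^3, L');\F)$ each differ from $\dim_\F H_1(X;\F)$ by $0$ or $1$, hence from one another by at most $1$. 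Taking $\F = \Z/5\Z$ gives $|\delta_5(L) - \delta_5(L')| \le 1$, which with the evaluation formula completes the proof. (The same argument with $\F = \Z/3\Z$, fed into the Lickorish--Millett formula, reproves Theorem~\ref{thm:jones}.)

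The step I expect to be the main obstacle is the first one: locating the precise $Q$-polynomial evaluation in the literature and verifying its normalization --- in particular, checking that any component-dependent prefactor is a unit, so that it drops out of $|Q(L;-\bar\phi)|$. This is the analogue of the factor $i^{c-1}$ in the Jones formula, harmless for $|V(L;\omega)|$ but a reminder that such evaluations are sensitive to the number of link components and to split unions. Everything downstream is a formal transcription of the branched-double-cover argument behind Theorem~\ref{thm:jones}; the only point worth flagging is that a \emph{non-coherent} band surgery, no less than a coherent one, lifts to a single Dehn surgery, precisely because $(B, t')$ is again a rational tangle.
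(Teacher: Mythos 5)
Your proposal is correct and follows exactly the mechanism this survey indicates for the cited result: the Jones evaluation formula $Q(L;-\bar{\phi})=\pm\sqrt{5}^{\,r}$ with $r=\dim H_1(\Sigma_2(S^3,L);\Z/5\Z)$, combined with the observation that a banding lifts, via the tangle decomposition \eqref{tangledecomp} and the rationality of the replaced tangles, to a Dehn filling of the fixed manifold $\Sigma_2(B_o,t_o)$, so that the mod-$5$ first Betti numbers of the two branched double covers differ by at most one. The Mayer--Vietoris step and the reduction to $|\delta_5(L)-\delta_5(L')|\leq 1$ are sound, so no further comment is needed.
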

Notably, there is also a relationship between the first homology of the branched double cover with coefficients in $\Z/5\Z$ and evaluations of the Q-polynomial, due to Jones \cite{Jones}. Indeed, $Q(L;-\bar{\phi}) =\pm\sqrt{5}^r$, where $r=\dim H_1(\Sigma_2(S^3, L);\Z/5\Z)$. An additional theorem of Kanenobu \cite[Theorem 2.2]{Kanenobu} implies that if a knot or link $L$ is obtained from an unknotting number one knot $K$ by a coherent or non-coherent band surgery, then either $2\det(L)$ or $-2\det(L)$ is a quadratic residue of $\det(K)$. The proof of this statement also uses the Jones polynomial.

\subsection{Unknotting via non-coherent band surgery and crossing changes. }
Recall that the classical unknotting number $u(K)$ of a knot is defined to be the minimal number of crossing changes required to convert $K$ into an unknot. The analogously defined number in the case of non-cohernet band surgery operations is denoted by $u_2(K)$. The notation here is meant to indicate $H(2)$-unknotting number, where an $H(2)$-move is equivalent to a non-coherent band surgery. Several authors have investigated the relationship between $u(K)$ and $u_2(K)$. 

Nakajima \cite{Nakajima} has shown that unknotting number provides an upper bound on non-coherent unknotting as follows.
\begin{theorem}\cite[Theorem 3.2.3]{Nakajima}
\label{thm:bound} 
In general, $u_2(K) \leq u(K)+1$. Moreover, if both signed unknotting numbers $u_+(K)$ and $u_-(K)$ are even, then $u_2(K) \leq u(K)$.
\end{theorem}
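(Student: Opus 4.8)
The plan is to take an optimal unknotting sequence for $K$, simulate each crossing change by non-coherent bandings, and then amortize so that the total count is $u(K)+1$ rather than $2u(K)$.

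First I would set up a local model. Fix a diagram of $K$ together with a set of $n=u(K)$ crossings whose switches yield an unknot diagram. At one such crossing the relevant tangle is the $(\pm 1)$-tangle, and switching the crossing replaces it by the $(\mp 1)$-tangle. One checks diagrammatically that this replacement factors as two non-coherent bandings through the trivial $(0)$-tangle: a first banding carrying $(\pm 1)\rightsquigarrow(0)$ and a second carrying $(0)\rightsquigarrow(\mp 1)$, both of which are permitted by the convention for non-coherent band surgery recorded in Figure~\ref{fig:tangle}. A minor technical point is that smoothing a crossing of a knot may disconnect it; this is handled by choosing at each stage a smoothing that keeps a single component, or by a small preliminary isotopy. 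Performing this construction at each of the $n$ marked crossings in turn already gives the crude bound $u_2(K)\le 2u(K)$.

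The sharpening to $u(K)+1$ is the heart of the argument and, I expect, the main obstacle. The idea is that the ``re-installing'' banding $(0)\rightsquigarrow(\mp 1)$ that completes the crossing change at the $i$-th marked crossing and the ``smoothing'' banding $(\pm 1)\rightsquigarrow(0)$ that begins the crossing change at the $(i{+}1)$-st can, after isotoping the bands, be realized by a single banding; equivalently, one processes the marked crossings while keeping one ``active'' band in play and pays only once to create it. Telescoping over $i=1,\dots,n-1$ then collapses the $2n$ bandings down to $n+1$. Making this merge precise, and checking that it does not disturb the already-processed crossings, is the delicate step: note that the naive ``one crosscap per crossing change'' spanning-surface construction only yields $u_2(K)\le\gamma_3(K)\le 2u(K)$, where $\gamma_3(K)$ is the crosscap number, so genuinely more careful bookkeeping of the bands is needed to reach $u(K)+1$.

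For the ``moreover'' statement the evenness hypothesis must be used to remove the residual $+1$. The idea is that since $u_+(K)$ and $u_-(K)$ are even, the crossing changes can be organized into like-signed pairs, each pair amounting to the replacement of a $(\pm 1/2)$-tangle by a $(\mp 1/2)$-tangle; realizing such a pair through the $(0)$-tangle as two bandings, the half-twist that the first member of the pair leaves on the active band is exactly cancelled by the second member, so the sequence closes up with no terminal ``clean-up'' banding and only $u(K)$ bandings are used. The crux here is the parity bookkeeping: verifying that the like-signed pairing can be chosen compatibly with the telescoping above, and that the framings of the bands really do cancel in pairs. I expect this parity argument, rather than the elementary local moves, to be where the real work lies.
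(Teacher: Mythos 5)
First, a point of reference: the paper does not actually prove this statement. It is quoted from Nakajima's thesis, and the text points the reader to \cite[Theorem 3.1]{KanenobuMiyazawa} for ``a very clear pictorial proof.'' So your attempt has to be judged against that cited argument, and on its own terms it has a genuine gap exactly where you say the ``heart of the argument'' lies. The easy half of your sketch is fine: factoring a crossing change as $(\pm 1)\rightsquigarrow(0)\rightsquigarrow(\mp 1)$ through the unoriented smoothing (choosing the resolution that keeps one component) is correct and gives $u_2(K)\le 2u(K)$. But the telescoping step --- merging the ``re-installing'' banding at the $i$-th crossing with the ``smoothing'' banding at the $(i{+}1)$-st into a single banding --- is asserted, not proved, and there is no general principle that makes it work. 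Those two bands sit at different crossings of the diagram; the composite of two band surgeries performed at disjoint sites is not, in general, a single band surgery. One way to see the obstruction concretely is in the branched double cover: each banding lifts to a distance-one Dehn surgery along the lift of its band's core arc, so your proposed merge would require a distance-one surgery along a single knot to reproduce the effect of surgery along a two-component link, which fails generically. Since the entire content of the theorem beyond the trivial $2u(K)$ bound is this improvement to $u(K)+1$ (and the bound is genuinely needed --- e.g.\ the figure-eight knot has $u=1$ but $u_2=2$, so no scheme charging one banding per crossing change can work in general), the proof is incomplete at its essential step. The same criticism applies to the ``moreover'' clause: the claim that a pair of like-signed crossing changes can be realized by exactly two bandings is the right kind of statement to aim for, and is close in spirit to what the cited pictorial proof actually establishes, but you give no construction of the two bands, and the $(\pm 1/2)$-tangle bookkeeping you propose only makes sense when the two crossings lie in a common twist region, which an arbitrary minimal unknotting sequence will not provide.

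The cited proofs proceed by exhibiting explicit local band pictures --- in particular, concrete diagrams realizing a crossing change, and pairs of crossing changes of like sign, by prescribed numbers of $H(2)$-moves --- rather than by post hoc surgery on the bands produced by the naive $2u(K)$ construction. If you want to complete your argument, the missing ingredient is precisely such an explicit lemma (with a picture) showing which configurations of crossing changes are realized by which bandings; without it, the amortization from $2u(K)$ down to $u(K)+1$, and the parity refinement down to $u(K)$, remain unproved. As a minor side remark, the chain $u_2(K)\le\gamma_3(K)\le 2u(K)$ you invoke parenthetically is also suspect: the crosscap number $\gamma_3(K)$ is not bounded above by any multiple of $u(K)$ in general, though this does not affect your main line of reasoning since you obtain $u_2(K)\le 2u(K)$ directly.
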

A very clear pictorial proof of Nakajima's theorem is given in \cite[Theorem 3.1]{KanenobuMiyazawa}. 
It is worth noting that this bound is certainly not sharp. 
For example, the $T(2, q)$ torus knots have unknotting number $(q-1)/2$, but because they bound a M\"obius strip, they can be unknotted by the addition of a single band.
Kanenobu and Miyazawa have directly computed or bounded $u_2(K)$ for most knots of up to 10 crossings \cite{KanenobuMiyazawa} . Their table indicates that all knots of up to nine crossings have $u_2(K)\leq 2$, with the exception of $9_{49}$, for which $u(9_{49}) = u_2(9_{49})=3$.  

The determinant $\det(K)$, the signature $\sigma(K)$ and the Arf invariant $\arf(K)$ are integer valued invariants of knots and links defined using the Seifert form \cite{BZ}. The signature of a link is equivalently the signature of the intersection form of the four-manifold obtained by taking the double cover of the four-ball branched over a Seifert surface for the link. For knots of a single component, the determinant is odd and the signature is even. The Arf invariant takes values in $\Z/2\Z$. 
In \cite{Yasuhara}, Yasuhara gave a criterion on unknotting via banding in terms of the signature and Arf invariant.
\begin{theorem}
\label{yas}
\cite[Proposition 5.1]{Yasuhara}
If a knot $K$ bounds a M\"obius band in the four-ball, then there exists an integer $x$ such that 
\[
	| 8x + 4 \arf(K)-\sigma(K) | \leq 2.
\]
\end{theorem}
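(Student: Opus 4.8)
The plan is to pass to the double branched cover $W=\Sigma_2(B^4,F)$, where $F\subset B^4$ is the given M\"obius band with $\partial F=K\subset S^3$, close it up using the spin double branched cover of a pushed-in Seifert surface of $K$, and extract the congruence from van der Blij's signature lemma together with two classical facts about knot invariants. So let $\Sigma:=\Sigma_2(S^3,K)=\partial W$; since $\det(K)$ is odd, $\Sigma$ is a $\Z/2$-homology sphere. Because $F$ deformation retracts to a circle, $H_1(W;\Z)=0$ and $b_2(W)=1$, and as $\Sigma$ is a rational homology sphere the intersection form of $W$ on $H_2(W;\Z)/\mathrm{tors}\cong\Z$ is nondegenerate over $\mathbb{Q}$, hence equal to $\langle m\rangle$ for some $m\neq 0$. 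The exact sequence of $(W,\partial W)$ together with Poincar\'e--Lefschetz duality gives $|m|=|H_1(\Sigma)|=\det(K)$; in particular $m$ is odd and $\sigma(W)=\operatorname{sign}(m)=\pm1$.

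Next, let $W_0=\Sigma_2(B^4,F_0)$ be the double cover of $B^4$ branched over a Seifert surface $F_0$ of $K$ pushed into the interior. Its intersection form is the symmetrized Seifert form $V+V^{T}$, which is \emph{even}; hence $W_0$ is spin, with $\partial W_0=\Sigma$, $H_1(W_0)=0$, and $\sigma(W_0)=\sigma(K)$. Form the closed oriented $4$-manifold $X=W\cup_\Sigma\overline{W_0}$. By Novikov additivity $\sigma(X)=\operatorname{sign}(m)-\sigma(K)$, and a Mayer--Vietoris computation (using that $H_1(\Sigma)$ has odd order $\det K$) shows that the intersection form of $X$ is a unimodular lattice $L$ containing $L_0:=\langle m\rangle\perp\bigl(-(V+V^{T})\bigr)$ as a sublattice of odd index. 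The key point is then that the generator $g$ of the $\langle m\rangle$-summand of $L_0$ is still a characteristic vector for the unimodular form $L$ — which uses precisely that both $[L:L_0]$ and $m$ are odd while $V+V^{T}$ is even — so van der Blij's congruence yields
\[
\sigma(X)\ \equiv\ g\cdot g\ =\ m\pmod 8.
\]

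Combining this with $\sigma(X)=\operatorname{sign}(m)-\sigma(K)$ and $|m|=\det(K)$ gives $\sigma(K)\equiv\pm\bigl(1-\det(K)\bigr)\pmod 8$, the sign being that of $m$. Invoking the classical fact that $\arf(K)=0$ exactly when $\det(K)\equiv\pm1\pmod 8$ and $\arf(K)=1$ exactly when $\det(K)\equiv\pm3\pmod 8$, one checks in each of the resulting cases that $\sigma(K)-4\arf(K)\equiv r\pmod 8$ with $r\in\{-2,0,2\}$; writing $r=\sigma(K)-4\arf(K)-8x$ for the appropriate integer $x$ then gives $|8x+4\arf(K)-\sigma(K)|\le 2$.

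The step I expect to be the main obstacle is the homological bookkeeping around $X$ — in particular verifying that $g$ remains characteristic after the odd-index enlargement $L_0\subset L$, so that van der Blij applies with the value $m$ and not some other representative of the discriminant class — together with keeping the two sign cases for $m$ straight and nailing down the classical input $\det(K)\bmod 8\leftrightarrow\arf(K)$. (Alternatively one can apply the $G$-signature theorem, equivalently the Guillou--Marin congruence, directly to $W$; this avoids closing up, but then one must bound the Brown-invariant contribution of a characteristic surface dual to the generator of $H_2(W)$, which is again where $b_2(W)=1$ enters essentially.) As a consistency check, for $K=4_1$ one has $\sigma(K)=0$ and $\arf(K)=1$, so $|8x+4|\ge 4$ for every integer $x$, recovering the known fact that the figure eight knot bounds no M\"obius band in $B^4$.
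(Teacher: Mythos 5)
This is a quoted result in the survey: the paper gives no proof of Theorem~\ref{yas}, only the attribution to Yasuhara and the remark that his argument applies his results on Euler numbers of surfaces in simply connected four-manifolds (essentially the characteristic-surface congruences you mention as an alternative at the end). So your proof can only be compared with that description, and it is a genuinely different route: you close up $W=\Sigma_2(B^4,F)$ with the even, spin piece coming from a pushed-in Seifert surface and run van der Blij on the resulting closed manifold. The lattice-theoretic core of your argument is correct: $g$ is characteristic for $L_0$ because $m$ is odd while $V+V^{T}$ is even; it remains characteristic in $L$ because for $v\in L$ and $d=[L:L_0]$ odd one has $d(g\cdot v)\equiv d^2(v\cdot v)\pmod 2$; van der Blij then gives $\sigma(X)\equiv m\pmod 8$, and combined with Novikov additivity and Murasugi's congruence $\Delta_K(-1)\equiv 1+4\arf(K)\pmod 8$ your four-case check comes out right ($\sigma(K)-4\arf(K)\equiv 0,\pm2\pmod 8$ in every case), as does the figure-eight sanity check.

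The one place you are too quick is the homology of $W$. The sentence ``because $F$ deformation retracts to a circle, $H_1(W;\Z)=0$ and $b_2(W)=1$'' is not a proof: the homology of a branched cover is not determined by the homotopy type of the branch locus, and for a M\"obius band the situation differs qualitatively from the orientable case --- the complement $B^4\setminus F$ has $H_1\cong\Z/2$ rather than $\Z$, since the meridian is conjugate to its inverse along the orientation-reversing core --- so the computation genuinely requires the transfer/Gordon--Litherland-type analysis carried out for arbitrary nonorientable surfaces in $B^4$ by Gilmer and Livingston (the \cite{LivingstonGilmer} of the paper). What is true, and what you should quote or prove, is that $b_1(W)=0$, $b_2(W)=b_1(F)=1$, and $\det Q_W=\pm\det(K)$ up to multiplication by the square of an odd integer. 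Fortunately your argument only uses $m$ modulo $8$, and odd squares are congruent to $1\pmod 8$, so the congruence $m\equiv\pm\det(K)\pmod 8$ --- which is all the final computation needs --- survives any odd torsion in $H_1(W)$; the odd-index claim for $L_0\subset L$ likewise survives. With that step properly sourced, your proof is complete, and it is arguably cleaner than the Euler-number route the paper attributes to Yasuhara, at the cost of invoking the pushed-in Seifert surface as an auxiliary spin filling.
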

Note that slice knots are those which bound a disk smoothly embedded in the four ball, e.g. the unknot is slice. The addition of a twisted band produces a knot $K$ which bounds a M\"obius band in the four-ball. Therefore Yasuhara's criterion implies that for a knot $K$ with $u_2(K)=1$, if $\sigma(K)\equiv 0 \pmod{8}$, then $\arf(K)=0$ and if $\sigma(K)\equiv 4 \pmod{8}$, then $\arf(K)=1$.

As mentioned above, using the Jones polynomial, Kanenobu \cite[Theorem 2.2]{Kanenobu} proved that if a knot or link $L$ is obtained by a coherent or non-coherent band surgery from a knot $K$ with unknotting number one, then $2\det(L)\equiv \pm s^2 \pmod{\det(K)}$ for some integer $s$. Kanenobu and Miyazawa also gave several criteria for a knot to have non-coherent band unknotting number one via the signature, the Arf invariant, and certain evaluations of the Jones polynomial or Q-polynomial \cite{KanenobuMiyazawa}. Their work both recovers Theorem \ref{yas} and establishes the following criterion. 
\begin{theorem}
\cite[Theorem 4.5]{KanenobuMiyazawa}
Let $K$ be a knot with $u_2(K)=1$. Suppose that $\det(K)\equiv 0 \pmod{3}$ and $\sigma(K)\equiv 2\epsilon \pmod{8}$, $\epsilon =\pm1$. Then
\[
	V'(K; -1)\equiv (-1)^{\arf(K)}8\epsilon \pmod{24}
\]
\end{theorem}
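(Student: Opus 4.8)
The plan is to compute $V'(K;-1)$ by routing it through the Casson--Walker invariant of the branched double cover, exploiting that $u_2(K)=1$ rigidly constrains $\Sigma_2(S^3,K)$.

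First I would extract the structural content of the hypothesis. A non-coherent band surgery taking the unknot $U$ to $K$ is a rational tangle replacement, so by the Montesinos trick it lifts to a rational Dehn surgery along a knot $\kappa\subset S^3=\Sigma_2(S^3,U)$; hence $\Sigma_2(S^3,K)\cong S^3_{p/q}(\kappa)$ for some $\kappa$, $p$, $q$ with $|p|=\det(K)$, and in particular $H_1(\Sigma_2(S^3,K);\Z)$ is cyclic---compatibly with Proposition~\ref{prop:gens}, which already yields $e_2(K)\le 1$. Next I would invoke Mullins' formula (valid because $\det(K)\ne 0$), expressing $\lambda\!\left(\Sigma_2(S^3,K)\right)$ as a fixed rational combination of $V'(K;-1)/V(K;-1)$ and $\sigma(K)$. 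Solving for $V'(K;-1)$ and using $V(K;-1)=\pm\det(K)$ turns the assertion into a congruence modulo $24$ involving $\det(K)\cdot\lambda\!\left(\Sigma_2(S^3,K)\right)$ and the term $\det(K)\sigma(K)$, the latter being $0$ modulo $3$ by hypothesis.

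The third ingredient is Walker's surgery formula for $S^3_{p/q}(\kappa)$, which writes $\lambda$ in terms of $\tfrac12\Delta''_\kappa(1)=a_2(\kappa)\in\Z$ (with $a_2(\kappa)\equiv\arf(\kappa)\bmod 2$), the ratio $q/p$, and a Dedekind-sum term in $(p,q)$. Multiplying by $V(K;-1)=\pm p$ clears denominators and gives an explicit integer formula for $V'(K;-1)$ in terms of $p=\pm\det(K)$, $q$, $a_2(\kappa)$, and $\sigma(K)$, which I would then reduce modulo $8$ and modulo $3$ separately. Modulo $8$ the hypothesis $\sigma(K)\equiv 2\epsilon$ enters directly, and the residual dependence on $q$ and on $\arf(\kappa)$ is eliminated using a Rokhlin-type identity relating $\arf(K)$, $\sigma(K)$ and the Rokhlin invariant of $\Sigma_2(S^3,K)$, together with the surgery formula for that invariant; this pins down $V'(K;-1)\equiv(-1)^{\arf(K)}8\epsilon\pmod 8$. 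Modulo $3$ the hypothesis $\det(K)\equiv 0$ kills the signature contribution, and a Dedekind-sum congruence mod $3$ combined with the same Rokhlin data produces the matching residue mod $3$. The Chinese Remainder Theorem then assembles both into the claimed congruence modulo $24$.

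The hard part is this last step: forcing the auxiliary data---the surgery denominator $q$, the Arf invariant of the lifted knot $\kappa$, and the Dedekind sums---to cancel so that only $\det(K)\bmod 3$, $\sigma(K)\bmod 8$ and $\arf(K)$ survive, while keeping exact track of the normalization constants (factors of $12$ and $2$) in Mullins' and Walker's formulas against the target modulus $24$. A more combinatorial alternative, likely closer to the original argument of Kanenobu and Miyazawa, is instead to fix a band presentation realizing $u_2(K)=1$, put the band move into the standard tangle form of Section~\ref{sec:bandsurgery}, and compute $V(K;t)$ via the Jones skein relation resolved around the band in terms of $(2,m)$-torus-link polynomials; differentiating at $t=-1$ then reduces the statement to the arithmetic of the single integer recording the framing of the band, with $\det$, $\sigma$ and $\arf$ all read off from a Gordon--Litherland form of the associated spanning M\"obius band. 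Either way, the congruence bookkeeping modulo $24$ is the crux.
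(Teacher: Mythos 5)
A preliminary caveat about the comparison: this survey does not actually prove the quoted criterion --- it is cited verbatim from Kanenobu and Miyazawa, and the only hint given about their argument is that it is ``centered on polynomial invariants,'' i.e.\ on skein-theoretic manipulation of the Jones and $Q$ polynomials around the band. That corresponds to the \emph{second}, alternative route you sketch in your closing sentences, not to your primary Casson--Walker route. So your main proposal is a genuinely different strategy, but as written it is a plan rather than a proof, and the step you yourself flag as ``the crux'' is exactly where it breaks down.

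Two concrete problems. First, since $u_2(K)=1$ means $K$ is obtained from the unknot by a single non-coherent banding, the Montesinos trick lifts this to a \emph{distance one}, i.e.\ integral, surgery: $\Sigma_2(S^3,K)\cong S^3_{n}(\kappa)$ with $|n|=\det(K)$. Allowing a general $p/q$ surgery both weakens the setup and hides the fact that integrality is what makes the Dedekind-sum (lens space) term in Walker's formula computable; this should be fixed before any congruence can be attempted. Second, and more seriously, the Mullins--Walker route produces the knot invariant $a_2(\kappa)$, hence $\arf(\kappa)$, of the \emph{lifted} knot $\kappa\subset S^3$, whereas the target congruence involves $\arf(K)$ of the original knot. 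The ``Rokhlin-type identity'' you invoke to trade one for the other is not an established fact you can cite: the standard identity $\mu(\Sigma_2(S^3,K))\equiv\sigma(K)\pmod{16}$ contains no Arf invariant of $K$ at all, and the standard way $\arf(K)$ enters Jones-polynomial congruences is through the evaluation $V(K;i)=(-1)^{\arf(K)}$, which your argument never touches. Without an explicit, proved bridge from $\arf(\kappa)$ and the framing $n$ to $\arf(K)$, the mod $8$ reduction cannot be completed, and the mod $3$ step rests on a Dedekind-sum congruence that is likewise asserted rather than derived. The skein-theoretic alternative you mention at the end is much closer to what Kanenobu and Miyazawa actually do, but it too is left entirely at the level of an outline, so neither route as presented establishes the congruence.
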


The proof of Kanenobu and Miyazawa's criterion is centered on polynomial invariants, whereas the proof of Yasuhara's criterion is an application of his work on the Euler numbers of surfaces in simply connected four-manifold. 
The Jones polynomial and Q-polynomial encode topological data about abelian knot invariants in a mysterious way. Indeed, the Jones polynomial may be defined using methods from many different areas of mathematical physics. The full relationship between the Jones polynomial, Q-polynomial, and other related knot polynomials with traditional geometric objects in knot theory (the knot group, cyclic branched covers, etc) is a topic of immense interest that is central to modern knot theory, topology, and mathematical physics (see for example \cite{Witten1989, Witten:lectures}). Any further discussion is beyond the scope of this exposition.

\section{Band surgery obstructions via Heegaard Floer homology}
\label{sec:new}

\subsection{Bandings along the trefoil.} 
\label{subsec:trefoil} 
Motivated by our interest in site-specific recombination and by the biological significance of the class of $T(2, n)$ torus knots and links, and in particular of the trefoil knot $T(2,3)$, in a joint work with Lidman \cite{LMV} we proved the following classification theorem. Note that $T(2, 3)$ is the right-handed trefoil; an analogous statement holds for the left-handed trefoil after mirroring.

\begin{theorem}\cite[Corollary 1.2]{LMV}
\label{thm:torus}
The torus knot $T(2, n)$ is obtained from $T(2,3)$ by a non-coherent banding if and only if $n$ is $\pm 1$, 3 or 7. The torus link $T(2, n)$ is obtained from $T(2,3)$ by a coherent banding if and only if $n$ is $\pm 2$, 4 or -6.
\end{theorem}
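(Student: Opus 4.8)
The plan is to prove each biconditional by establishing its two implications separately; the realization directions are elementary and the substance lies in the obstruction directions. For the realization directions I would exhibit the bands. Since $T(2,3)$ bounds a M\"obius band in $S^3$ (the $3$-half-twisted annulus), a single non-coherent banding carries it to the unknot $T(2,\pm1)$; the value $n=3$ is realized by a trivial band; and the bandings to $T(2,7)$, together with the coherent bandings to the Hopf link $T(2,\pm2)$ (nullify a crossing), to $T(2,4)$, and to $T(2,-6)$, are each obtained by drawing an explicit band on a low-crossing diagram of $T(2,3)$ and identifying the product. These are routine verifications, best presented in figures.

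For the forward direction I would pass to the branched double cover, as in Section~\ref{sec:new}. A band surgery is a two-string tangle replacement inside a fixed outside tangle $(B_o,t_o)$, and after isotopy the replaced tangle may be taken to be the $(0)$-tangle and its replacement the $(\infty)$-tangle. Applying $\Sigma_2$ produces a $3$-manifold $M$ with torus boundary, namely the complement of a knot $c$ in $\Sigma_2(S^3,T(2,3))=L(3,1)$, together with two Dehn fillings of $M$: one along the meridian $\mu$ of $c$, recovering $L(3,1)$, and one along a slope $\mu'$ with $\Delta(\mu,\mu')=1$, yielding $\Sigma_2(S^3,T(2,n))$. Since $\Sigma_2(S^3,T(2,n))=L(n,1)$ for every $n$ (a lens space whether $T(2,n)$ is a knot or a two-component link), up to orientation, the hypothesis forces $L(n,1)$ to be obtained from $L(3,1)$ by a distance-one surgery on a knot, and this reduction handles the coherent and non-coherent cases simultaneously. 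This is exactly the regime where classical tangle calculus breaks down — the Cyclic Surgery Theorem would require $(B_o,t_o)$ rational, which we cannot assume — and indeed the classical obstructions surveyed in Section~\ref{sec:bandsurgery} (the bound on $e_2$, the Jones- and $Q$-polynomial ratio criteria, Kanenobu's determinant congruence) impose \emph{no} restriction on $n$ here, so stronger tools are essential.

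It remains to constrain the distance-one surgery. A first round of constraints is homological: the order of $H_1$ is governed by the class $[c]\in H_1(L(3,1))=\Z/3$ and by the framing determining $\mu'$, which restricts $n$ modulo $3$ and splits the problem into a few families but does not isolate the finite list. The decisive input is Heegaard Floer homology. Both $L(3,1)$ and $L(n,1)$ are $L$-spaces, so $c$ has simple knot Floer homology, and the Ozsv\'ath--Szab\'o rational (mapping cone) surgery formula relates the correction terms $d(L(n,1),\mathfrak{t})$ to those of $L(3,1)$ across the $\spinc$ structures matched by the surgery cobordism. Monotonicity of $d$-invariants under the associated negative-definite two-handle cobordism, organized into a changemaker-type lattice-embedding obstruction in the spirit of Greene's solution to the lens space realization problem, rules out all but finitely many $n$, and a finite case-check eliminates the spurious survivors, leaving exactly $n\in\{\pm1,3,7\}$ in the non-coherent case and $n\in\{\pm2,4,-6\}$ in the coherent case; in some of the remaining cases one can alternatively realize $c$ as a knot in a solid torus inside $L(3,1)=S^3_3(U)$, $U$ an unknot, and appeal to the classification of knots in solid tori admitting lens space surgeries.

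The main obstacle, as I see it, is this last round. Because $c$ is not known in advance, the obstruction must hold uniformly over all knots in $L(3,1)$ carrying a distance-one lens space surgery, so one cannot simply compare $d$-invariants knot by knot; the inequalities must be assembled through the lattice-embedding formalism, and one must then confirm that this formalism is sharp enough to exclude the borderline small values of $|n|$, which sit near the boundary of the inequalities. Running through the whole argument is the bookkeeping of orientations — $\Sigma_2$ of the right-handed versus left-handed $T(2,n)$, and the sign $d(-Y)=-d(Y)$ — which must be tracked carefully (it is precisely what distinguishes $n=-6$ from $n=6$, and $n=7$ from $n=-7$) for the numerology to come out as stated.
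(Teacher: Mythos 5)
Your proposal is correct and takes essentially the same route as the paper: explicit bands realize the listed values of $n$, and the converse passes to the branched double cover via the tangle decomposition and the Montesinos trick, reducing the problem to the classification of distance-one surgeries taking $L(3,1)$ to $L(n,1)$, which is exactly Theorem~\ref{thm:lens}. Your additional sketch of how that lens-space classification is itself proved goes beyond what the paper's proof of this statement does (it simply cites the result), and the argument in \cite{LMV} actually proceeds via $d$-invariant surgery formulas generalizing Ni--Wu rather than a Greene-style changemaker lattice embedding, but this does not affect the correctness of your proof of the theorem as stated.
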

The main effort of Theorem \ref{thm:torus} is of course to show that the trefoil is not related to $T(2, n)$ for values of $n$ other than those listed. This statement follows as a corollary of a stronger statement, Theorem \ref{thm:lens} below. Recall from Section \ref{sec:bandsurgery} that the \emph{distance} between two surgery slopes is their minimal geometric intersection number. Given a knot $K$ in a three-manifold $Y$ (e.g. $Y=L(3, 1)$), there is always a trivial filling along the meridian of $K$ for which Dehn surgery returns $Y$ \cite{Gordon}. By a \emph{distance one surgery} we mean a surgery slope that intersects the meridian of $K$ exactly once. Elsewhere these are called integral surgeries. We have:

\begin{theorem}\cite[Theorem 1.1]{LMV}
\label{thm:lens}
The lens space $L(n,1)$ is obtained by a distance one surgery along a knot in the lens space $L(3, 1)$ if and only if $n$ is one of $\pm 1, \pm 2, 3, 4, -6$ or $7$.  
\end{theorem}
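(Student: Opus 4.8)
The plan is to prove the two implications separately, with essentially all of the effort in the ``only if'' direction.

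For the ``if'' direction I would exhibit each surgery by hand. The value $n=3$ is the trivial (meridional) filling. For the other values, first draw the explicit band surgeries relating $T(2,3)$ to $T(2,n)$ --- non-coherent for $n=\pm1,7$ and coherent for $n=\pm2,4,-6$ --- which, under the conventions of Figure \ref{fig:tangle}, replace a $(0)$-tangle by an $(\infty)$- or $(\pm1)$-tangle. Passing to branched double covers, such a tangle replacement is exactly a re-gluing of a solid torus in $\Sigma_2(S^3,T(2,3))=L(3,1)$ along a slope at distance one from the old meridian; since $\Sigma_2(S^3,T(2,n))=L(n,1)$ this realizes the claimed surgeries, and one tracks orientations so that the signs on $n$ come out as stated.

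For the ``only if'' direction, suppose $L(n,1)$ is obtained from a knot $K\subset L(3,1)$ by a distance one surgery. This produces a $2$-handle cobordism $W$ with $b_2(W)=1$ between the two lens spaces. After possibly reversing all orientations and replacing $K$ by its surgery dual, arrange that $W$ is negative definite --- the degenerate case $b_2^+(W)=b_2^-(W)=0$ being handled separately via the rational surgery (mapping cone) formula. Capping the $L(3,1)$-end of $W$ with the negative-definite linear plumbing $P$ bounded by $\pm L(3,1)$ yields a negative-definite $4$-manifold $Z=P\cup W$ with $b_2(Z)=3$ and $\partial Z=\pm L(n,1)$. One then applies the Ozsv\'ath--Szab\'o correction-term inequality
\[
    c_1(\mathfrak{t})^2 + b_2(Z) \le 4\,d\!\left(\partial Z,\ \mathfrak{t}|_{\partial Z}\right)
\]
over all $\spinc$ structures $\mathfrak{t}$ on $Z$, and combines it with the recursive formula for the $d$-invariants of $L(n,1)$ and of $L(3,1)$. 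Because the covectors $c_1(\mathfrak{t})$ are the characteristic vectors of a rank-three negative-definite form containing the fixed plumbing summand, this bounds both the size of the surgery coefficient and the ``spread'' of the form, and hence bounds $|n|$. The classical obstructions of Section \ref{sec:bandsurgery} --- Proposition \ref{prop:gens}, the linking form, and the polynomial criteria --- are far too weak to give any such finite bound, which is exactly why a Heegaard Floer input is needed; note also that the older tangle calculus does not apply here, because one cannot assume the outside tangle $(B_o,t_o)$ of Figure \ref{bandisotopy} is Montesinos, so the Cyclic Surgery Theorem is unavailable.

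The main obstacle is not the inequality but extracting from it the exact list $\{\pm1,\pm2,3,4,-6,7\}$ and nothing more. In practice the correction-term bound will leave a small number of stray candidate values of $n$, each of which must be eliminated individually: by Donaldson's diagonalization theorem applied to a closed definite $4$-manifold assembled from $W$ and linear plumbings, in the orientation-dependent cases where such a manifold exists, forcing a changemaker-type embedding of lattices that excludes all but the listed values; by the full Heegaard Floer mapping-cone surgery formula when $d$-invariants alone do not settle a case; and by homological and linking-form arguments for the smallest $n$. Maintaining consistent orientations and keeping track of the sign of the surgery cobordism throughout this case analysis is the other point where genuine care is required.
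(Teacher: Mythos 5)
Your ``if'' direction is the same as the paper's: the explicit bandings of Figures \ref{fig:bandings-1} and \ref{fig:bandings-2}, lifted via the Montesinos trick to distance one surgeries in the branched double cover, realize exactly the listed values of $n$. For the ``only if'' direction, however, you take a genuinely different route from \cite{LMV}. You propose a Greene-style argument in the spirit of the lens space realization problem \cite{Greene}: embed the surgery cobordism $W$ in a negative definite $4$-manifold by capping with a linear plumbing, apply the Ozsv\'ath--Szab\'o correction-term inequality, and finish with Donaldson diagonalization and changemaker-type lattice embeddings. The proof sketched in this survey (and carried out in \cite{LMV}) instead works entirely at the level of $3$-manifolds: it establishes surgery formulas for the $d$-invariants of distance one surgeries along knots \emph{in $L(3,1)$}, generalizing the Ni--Wu formulas \cite{NiWu} and expressed in terms of integer-valued knot invariants akin to Rasmussen's local $h$-invariants \cite{Rasmussenthesis}, and then compares these directly with the known $d$-invariants of $L(n,1)$. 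Your approach, if it closes, would have the virtue of leaning on well-developed lattice combinatorics; the paper's approach avoids Donaldson's theorem altogether and produces reusable surgery formulas for knots in lens spaces.

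Two caveats. First, the decisive step in your outline --- extracting the exact list $\{\pm1,\pm2,3,4,-6,7\}$ and ``nothing more'' --- is precisely the part you defer to an unspecified case analysis, so as written the argument is a program rather than a proof; note also that for one choice of orientation the $d$-invariants of $L(n,1)$ grow positively with $n$ and the correction-term inequality becomes vacuous, so bounding $|n|$ already requires playing both orientations (hence both negative definite fillings) against each other. Second, you do not address the homology class of $K$ in $H_1(L(3,1))\cong\Z/3\Z$, which governs the possible values of $n$ modulo $3$, the degenerate (non-definite) case of $W$, and the divisibility of the characteristic covectors in your lattice argument; the case division by $[K]$ is an organizing feature of the actual proof and cannot be skipped in yours either.
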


Taking Theorem \ref{thm:lens} at face value, we can now prove Theorem \ref{thm:torus}.
\begin{proof}[Proof of Theorem \ref{thm:torus}]

Suppose that $n=\pm 1, \pm 2, 3, 4, -6$ or $7$. Band moves illustrating these cases are shown in Figures \ref{fig:bandings-1} and \ref{fig:bandings-2}. Conversely, suppose that there exists a non-coherent band surgery relating $T(2, 3)$ and $T(2, n)$. Using the tangle decomposition in line \eqref{tangledecomp}, we may write
\begin{equation*}
	(S^3, T(2, 3))= (B_o, t_o) \cup (B, t) \text{  and } (S^3, T(2,n))= (B_o, t_o) \cup (B, t'),
\end{equation*}
where $(B, t)$ is the zero tangle, $ (B, t')$ is the infinity tangle, and $(B_o, t_o)$ is some outside tangle. The zero and infinity tangles are the first two tangles shown in Figure \ref{fig:tangle}. 

The double cover of $S^3$ branched over $T(2, n)$ is the lens space $L(n, 1)$. We may also write $L(n, 1)$ in terms of the tangle decomposition \eqref{tangledecomp} as
\[
	L(3, 1) = \Sigma_2(B_o, t_o)\cup\Sigma_2(B, t) \text{ and } L(n, 1) = \Sigma_2(B_o, t_o)\cup\Sigma_2(B, t'),
\]
where each union of branched double covers of tangles is taken along the bounding tori. The branched double cover of the outside tangle, $\Sigma_2(B_o, t_o)$, is a compact, connected, oriented 3-manifold with torus boundary that may be alternatively described as the complement of a knot $J$ in $L(3, 1)$. This knot $J$ is the lift of a properly embedded arc arising as the core of the band from the band surgery that relates $T(2, 3)$ and $T(2,n)$. Both $\Sigma_2(S^3, T(2, 3)) = L(3, 1)$ and $\Sigma_2(S^3, T(2, n))=L(n, 1)$ are obtained by Dehn surgery along $J$. The slope yielding $L(3,1)$ is just the meridian of $J$, and the Montesinos trick \cite{Montesinos} implies that the slope yielding $L(n, 1)$ is distance one from the meridian.\footnote{A very informative exposition of the Montesinos trick can also be found in \cite[Lecture 4]{Gordon}} Theorem \ref{thm:lens} implies that $n$ cannot fall out of the set of integers $\{\pm 1, \pm 2, 3, 4, -6, 7\}$.
\end{proof}

If one is only interested in coherent band surgery, Theorem \ref{thm:torus} can be obtained more readily. The banding is coherent when $n$ is even, in which case $T(2, n)$ is a link of linking number $\pm n/2$, depending on the orientation. When the linking number is positive, the signature of $T(2, n)$ is $1-n$. The statement follows an easy consequence of Murasugi's criterion from 1965 for coherent banding:
\begin{theorem}\cite[Lemma 7.1]{Murasugi}
\label{mur}
It if two links $L$ and $L'$ are related by a coherent band surgery, then $|\sigma(L) - \sigma(L')|\leq 1$. 
\end{theorem}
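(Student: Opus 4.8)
The plan is to reduce this inequality to an elementary statement about how the signature of a symmetric integral matrix changes under bordering by a single row and column. Recall that $\sigma(L)=\operatorname{sign}(V+V^{\top})$, where $V$ is a Seifert matrix for $L$, that is, the matrix of the Seifert linking form on $H_1(\Sigma;\Z)$ for a Seifert surface $\Sigma$ of the oriented link $L$, and that this value is independent of the chosen Seifert surface. So it suffices to exhibit, for $L$ and $L'$ related by a coherent band surgery, Seifert surfaces whose symmetrized Seifert forms differ in a controlled way.

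The key geometric step, and the one I expect to require the most care, is the following: when the band $b$ realizing the surgery is coherent, one can choose a connected Seifert surface $\Sigma$ for $L$ so that $b$ meets $\Sigma$ exactly in its two feet $b(I\times\partial I)\subset\partial\Sigma=L$, in which case $\Sigma':=\Sigma\cup b$ is a Seifert surface for $L'$. To arrange this I would start from any connected Seifert surface for $L$, push $b$ off a collar of $L$ so that near its feet it leaves $\Sigma$ transversally on one side, isotope the two free edges $b(\partial I\times I)$ off $\Sigma$, and remove the remaining circles of $b\cap\operatorname{int}\Sigma$ by innermost-disk surgeries on $\Sigma$ (re-tubing afterwards to keep $\Sigma$ connected). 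This is in the spirit of the fact recalled in Section~\ref{sec:bandsurgery} that a band may be isotoped onto a Seifert surface (see \cite{ST}, \cite[Theorem~1.6]{HS}). Coherence enters precisely here: it is exactly the hypothesis that the orientation of $\Sigma$ extends over the attached band, so that $\Sigma\cup b$ is an orientable surface bounding the oriented link $L'$; for a non-coherent band the same construction yields a M\"obius-band summand and no Seifert surface.

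It then remains to do the bookkeeping. Attaching the $1$-handle $b$ to $\Sigma$ along two arcs gives $\chi(\Sigma')=\chi(\Sigma)+1-2=\chi(\Sigma)-1$, so, both surfaces being connected, $b_1(\Sigma')=b_1(\Sigma)+1$. Set $n=b_1(\Sigma)$ and choose a basis of $H_1(\Sigma';\Z)$ obtained from a basis of $H_1(\Sigma;\Z)$ by adjoining one class, represented by the core of the band closed up through $\Sigma$; with respect to such a basis a Seifert matrix $W$ for $L'$ has $V$ as its upper-left $n\times n$ block. Hence $W+W^{\top}$ is an $(n+1)\times(n+1)$ symmetric matrix having $V+V^{\top}$ as a principal submatrix, and by Cauchy interlacing the number of positive eigenvalues and the number of negative eigenvalues each increase by $0$ or $1$ in passing from $V+V^{\top}$ to $W+W^{\top}$. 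Therefore $\bigl|\operatorname{sign}(W+W^{\top})-\operatorname{sign}(V+V^{\top})\bigr|\le 1$, that is, $|\sigma(L')-\sigma(L)|\le 1$, as claimed. The only nontrivial ingredient is the geometric normalization of the band in the second paragraph; everything after it is routine homological algebra and the interlacing theorem.
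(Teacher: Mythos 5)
The paper does not prove this statement; it is quoted verbatim as \cite[Lemma 7.1]{Murasugi}, so there is no in-text argument to compare against. Your proof is correct and is essentially the classical one: normalize the band so that it meets a (connected, orientable) Seifert surface $\Sigma$ for $L$ only in its feet, observe that coherence is exactly what makes $\Sigma\cup b$ a Seifert surface for $L'$, and then note that the symmetrized Seifert form of $L'$ contains that of $L$ as a corank-one principal submatrix, so Cauchy interlacing bounds the change in signature by $1$. The only step that genuinely needs care is the normalization in your second paragraph, and your sketch handles it at an appropriate level of detail (one should treat arcs of $b\cap\Sigma$ ending on the free edges with the same outermost-arc cleanup you apply to circles via innermost disks, and recall that the signature is independent of the Seifert surface, so the compressions are harmless). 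This is a sound, self-contained derivation of the cited inequality.
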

When the linking number is negative, the signature of $T(2, n)$ is $1$. In this case, the classification of coherent band surgeries relating $T(2, 3)$ and $T(2, n)$ for $n$ even follows from \cite[Theorem 3.1]{DIMS}, in which they characterize coherent band surgeries relating $T(2,n)$ torus links and certain two-bridge knots.  

Our proof of Theorem \ref{thm:lens} is rather technical. In brief, the theorem is proved by analyzing the behavior of the Heegaard Floer homology $d$-invariants under distance one surgeries. The \emph{$d$-invariant} $d(Y, \mathfrak{t})$ is a rational number that comes from the Heegaard Floer homology $HF^+(Y, \mathfrak{t})$ of a rational homology sphere $Y$ equipped with a $\spinc$ structure $\mathfrak{t}$. The $d$-invariants, and the Heegaard Floer package in general, are due to~\os~\cite{OSAbs}, and analogues of them exist in other Floer theories. Each $d$-invariant $d(Y, \mathfrak{t})$ is defined to be the minimal grading of a non-torsion class in $HF^{+}(Y, \mathfrak{t})$ \cite{OSAbs}, and is in fact a $\spinc$ rational homology cobordism invariant. The most technical aspect of Theorem \ref{thm:lens} required us to prove series of formulas (see \cite{LMV}) which relate the $d$-invariants under surgery along knots in the lens space $L(3, 1)$ to certain integer-valued knot invariants that are related to Rasmussen's local $h$-invariant \cite{Rasmussenthesis}. These formulas generalize work of Ni and Wu \cite{NiWu}. 

Theorem \ref{thm:lens} can be viewed as a partial generalization of the \emph{lens space realization problem}, which was proved by Greene \cite{Greene}. The lens space realization problem asks for a characterization of the lens spaces that arise from integral surgery along nontrivial knots in $S^3$ (note that distance one surgeries are integral surgeries). This type of problem falls into a broader class of questions involving the characterization of three-manifolds arising by integral surgery along knots, and the characterization of knots that admit lens space surgeries. The Berge conjecture is one of the most prominent of these questions, having survived substantial attacks over several decades \cite{BleilerLitherland, GodaTeragaito, RasmussenLens, OSlens, Baker:SurgeryI, Baker:SurgeryII, Hedden, berge}. 
\begin{conjecture}\cite{berge}
\label{berge}
A hyperbolic knot $K$ has a lens space surgery if and only if $K$ is double primitive, and the surgery is the corresponding integral surgery.
\end{conjecture}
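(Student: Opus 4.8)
The plan is to attack the hard direction: assuming a hyperbolic knot $K\subset S^3$ admits a lens space surgery, one must show $K$ is doubly primitive and that the surgery is integral. The first step is to pin down the slope. Since $K$ is hyperbolic it is in particular neither a torus knot nor a satellite, so the Cyclic Surgery Theorem \cite{CGLS} forces any surgery producing a manifold with cyclic fundamental group --- in particular any lens space surgery --- to be along an integral slope $p\in\Z$. Thus ``the surgery is the corresponding integral surgery'' is automatic once one has identified $K$, and the real content is the identification.

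The second step is to harvest the constraints that Floer-theoretic technology already supplies. Knots with lens space surgeries are fibered (Ni), their Alexander polynomials satisfy the Ozsv\'ath--Szab\'o torus-knot-coefficient condition, and --- most decisively --- Greene's solution of the lens space realization problem \cite{Greene} shows that the pair (lens space, integer $p$) actually occurs on Berge's list: there is a genuine doubly primitive knot $B\subset S^3$ with $S^3_p(B)$ homeomorphic to the given lens space. Greene's changemaker-lattice analysis moreover forces the knot Floer homology of $K$ (hence its genus, fiberedness data, and Turaev torsion) to coincide with that of $B$. Passing to the surgery-dual description, the core $K^*$ of the filled solid torus sits in a prescribed homology class in the lens space and has knot Floer homology of the smallest possible rank: it is a \emph{Floer simple} knot, in the sense studied by Hedden, Rasmussen, and Baker--Grigsby--Hedden.

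The third step --- and this is where the argument must really be completed --- is to upgrade ``$K^*$ is Floer simple and admits an $S^3$ surgery'' to ``$K^*$ is \emph{the} simple (grid number one) knot in its homology class,'' since reconstructing $K$ from $K^*$ is then routine. Greene and Rasmussen have shown that this rigidity statement about simple knots in lens spaces is equivalent to Conjecture \ref{berge}, so the plan ultimately reduces to a classification: a knot in a lens space whose (sutured) knot Floer homology has minimal rank and which admits an integral surgery to $S^3$ should be geometrically simple. I expect this to be the main and essentially unresolved obstacle: knot Floer homology is not known to be a complete invariant for knots in lens spaces, so translating the numerical Floer coincidence into an ambient isotopy seems to require a genuinely new input --- for instance a direct use of the hyperbolic geometry of the exterior (a Berge-conjecture analogue of the geometric classification of exceptional surgeries), a bordered-Floer or instanton refinement that detects more than the hat-flavored invariants, or a JSJ/essential-surface argument ruling out interesting companions for the Floer-simple dual. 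Without such an ingredient the strategy stalls precisely at this last step, which is why the conjecture remains open.
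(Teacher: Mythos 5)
There is no proof to compare against: the statement you were given is Conjecture~\ref{berge}, the Berge conjecture, which the paper presents explicitly as an open problem (``having survived substantial attacks over several decades'') and does not prove. Your proposal, to its credit, is an accurate map of the known partial results --- the Cyclic Surgery Theorem forcing integrality of the slope, Greene's resolution of the lens space realization problem, the surgery-dual reformulation in terms of Floer simple knots in lens spaces due to Hedden, Rasmussen, and Baker--Grigsby--Hedden --- but it is not a proof, and you say so yourself: the final step, upgrading ``the dual knot is Floer simple and admits an integral $S^3$ surgery'' to ``the dual knot is the simple knot in its homology class,'' is exactly the open content of the conjecture. Knot Floer homology is not known to separate knots in lens spaces, so the numerical coincidence cannot currently be promoted to an isotopy, and no argument in the paper or in the literature closes this gap.

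So the verdict is that the proposal contains a genuine, and genuinely unavoidable, gap at its third step; the honest framing (``the strategy stalls precisely at this last step, which is why the conjecture remains open'') is the correct conclusion. One small caution on the first step: the Cyclic Surgery Theorem yields integrality for surgeries producing manifolds with cyclic fundamental group on knots that are not torus knots; hyperbolicity gives you this, but the theorem's conclusion is about the slope being integral or the knot being a torus knot, so the deduction should be phrased as ruling out the torus-knot alternative rather than as an automatic consequence of hyperbolicity alone. That is a presentational point, not a mathematical error. As a survey of why the conjecture is hard, your text is sound; as a proof, it cannot be accepted because the theorem it would need at the end does not exist.
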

A hyperbolic knot is one whose complement may be given a metric of constant curvature $-1$. 
A doubly primitive knot $K$ is one that lies on the surface $F$ of a genus two Heegaard splitting $X\cup_F X'$ of $S^3$, where $[K]$ belongs to bases for the free groups $\pi_1(X)$ and $\pi_1(X')$. That the surgery coefficient is integral is due to the Cyclic Surgery Theorem \cite{CGLS}. A discussion of this conjecture can be found in \cite{Gordon}. 
Another nice discussion of some related problems in knot theory and three-manifold topology can be found in Hedden and Watson's article \cite[Section 6]{HeddenWatson}.
\begin{figure}
\centering
\includegraphics[height=0.8in]{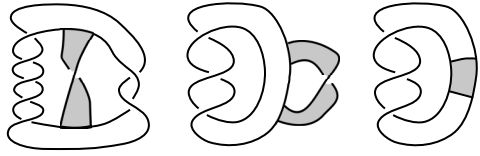}
\caption{Non-coherent bandings: (Left) $T(2,3)$ to $T(2,7)$. (Center) $T(2,3)$ to $T(2, 3)$. (Right) $T(2, 3)$ to the unknot.}
\label{fig:bandings-1}
\end{figure}

\begin{figure}
\centering
\includegraphics[height = 0.8in]{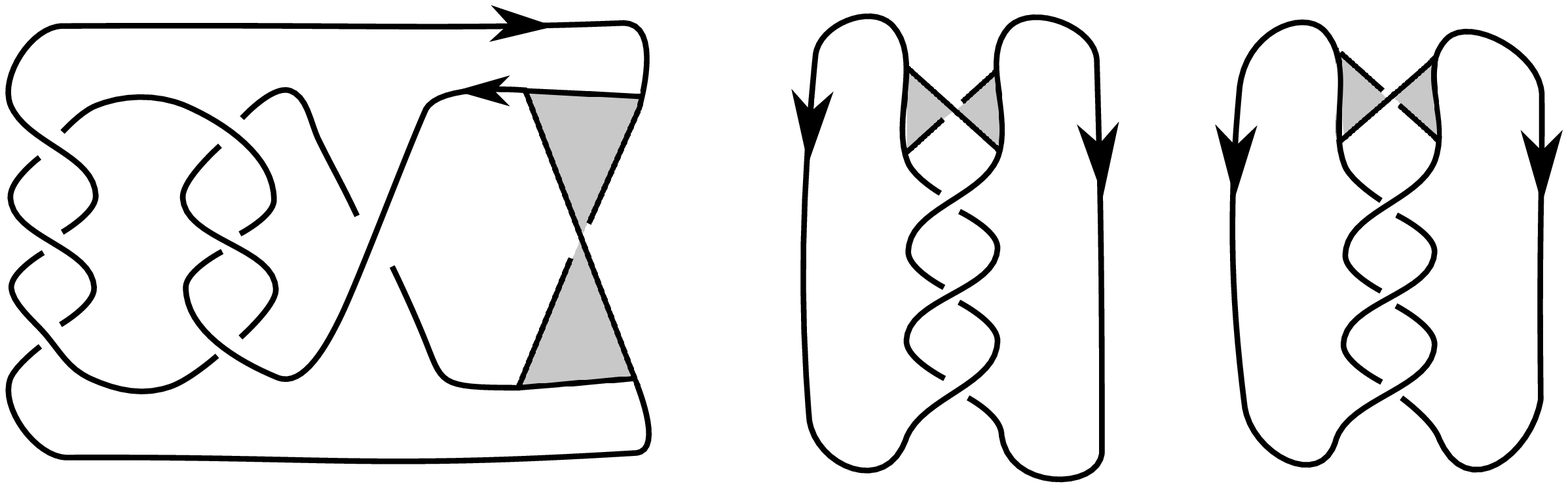}
\caption{Coherent bandings: (Left) $T(2, -6)$ to $T(2, 3)$. (Center) $T(2,3)$ to $T(2, 2)$. (Right) $T(2,3)$ to $T(2,4)$.}
\label{fig:bandings-2}
\end{figure}

\subsection{A criterion from determinant and signature.} 
Using a similar strategy as that in \cite{LMV}, that is, by studying the $d$-invariants under surgery in the branched double cover, we established a new criterion on banding via the determinant and the signature $\sigma(K)$ of a knot. Before stating our result, we need to recall the following definition of \os~\cite{OS:Branched}. Let $L_h$ and $L_v$ denote the horizontal and vertical smoothings of crossing in a fixed diagram of $L$.

\begin{definition}
\label{def:qa}
The set $QA$ of quasi-alternating links is the smallest set of links satisfying the following properties:
\begin{enumerate}
	\item The unknot is in $QA$.
	\item The set is closed under the following operation: If $L$ is a link which admits a diagram with a crossing such that 
		\begin{enumerate}
			\item both resolutions $L_h$ and $L_v$ are in $QA$
			\item $\det(L_v) \neq 0$, $\det(L_h)\neq 0$, and
			\item $\det(L) = \det(L_v) + \det(L_h)$,
		\end{enumerate}
	then $L$ is in $QA$. 
\end{enumerate}
\end{definition}
By definition, any alternating knot is quasi-alternating. 

\begin{theorem}
\label{thm:sigdif}
Let $K$ and $K'$ be a pair of quasi-alternating knots and suppose that $\det(K) = m = \det(K')$ for some square-free integer $m$. If there exists a band surgery relating $K$ and $K'$, then $|\sigma(K) - \sigma(K')|$ is $0$ or $8$. 
\end{theorem}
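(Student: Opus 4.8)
The plan is to lift the band surgery to a Dehn surgery in the branched double covers of $K$ and $K'$ and then read off the signature constraint from the behavior of the Heegaard Floer $d$-invariants, in direct analogy with the proof of Theorem~\ref{thm:lens}. Since $K$ and $K'$ are quasi-alternating, a theorem of \os~\cite{OS:Branched} guarantees that the branched double covers $Y := \Sigma_2(S^3,K)$ and $Y' := \Sigma_2(S^3,K')$ are L-spaces, and $|H_1(Y;\Z)| = \det(K) = m = \det(K') = |H_1(Y';\Z)|$. As determinants of knots are odd, $m$ is an odd square-free integer, so $H_1(Y;\Z) \cong \Z/m\Z \cong H_1(Y';\Z)$ is cyclic and each of $Y$, $Y'$ carries a unique spin structure, which I will denote by $\mathfrak{t}_0$ and $\mathfrak{t}_0'$. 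Recall finally that for a quasi-alternating knot $K$ the Manolescu--Owens invariant satisfies $\delta(K) := 2\,d(\Sigma_2(S^3,K),\mathfrak{t}_0) = -\sigma(K)/2$. Consequently $|\sigma(K)-\sigma(K')| = 4\,|d(Y,\mathfrak{t}_0) - d(Y',\mathfrak{t}_0')|$, and Theorem~\ref{thm:sigdif} becomes equivalent to the assertion that $|d(Y,\mathfrak{t}_0) - d(Y',\mathfrak{t}_0')| \in \{0,2\}$.

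Next I would set up the surgery. A band surgery relating $K$ and $K'$ is a two-string tangle replacement as in \eqref{tangledecomp} (necessarily non-coherent, since it relates two knots), so by the Montesinos trick~\cite{Montesinos} there is a knot $J \subset Y$ such that the trivial filling along the meridian of $J$ returns $Y$ while a distance-one surgery along $J$ returns $Y'$. The key homological input comes here: because $m$ is square-free, a computation with the first homology of the exterior of $J$ and its Dehn fillings (equivalently, with the linking form) forces the distance-one slope and the meridian of $J$ to differ by a single longitude, so that $Y'$ is an \emph{integral} surgery on $J$ of the sort to which the mapping-cone formula applies, and, moreover, so that $\mathfrak{t}_0$ and $\mathfrak{t}_0'$ are matched up by the two-handle cobordism of the surgery. (Were $m$ not square-free, the identification of $\spinc$ structures and the bookkeeping below would be considerably more delicate.)

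With this in place I would follow the strategy of \cite{LMV}: apply the large-surgery / mapping-cone formula for the $d$-invariants of an integral surgery along a knot in an L-space --- a formula generalizing that of Ni and Wu~\cite{NiWu} --- to express $d(Y',\mathfrak{t}_0')$ in terms of the $d$-invariant of the trivial filling together with a correction term $2V$, where $V$ is one of Rasmussen's non-negative local $h$-invariants of $J$. The decisive point is that since \emph{both} $Y$ and $Y'$ are L-spaces of the same square-free order $m$, the $d$-invariants on either side already realize the maximal possible rank, and this rigidity forces the relevant local invariant to satisfy $V \le 1$ --- intuitively, an L-space can only be produced from an L-space of the same order by a surgery along a knot that is ``as simple as possible,'' just as a $\pm 1$-surgery on a knot in $S^3$ yields an L-space only for the unknot ($V = 0$) or a trefoil ($V = 1$). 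Since the formula changes the relevant $d$-invariant by $-2V$, one obtains $d(Y,\mathfrak{t}_0) - d(Y',\mathfrak{t}_0') = \pm 2V$ with $V \in \{0,1\}$, hence $|d(Y,\mathfrak{t}_0) - d(Y',\mathfrak{t}_0')| \in \{0,2\}$, and therefore $|\sigma(K)-\sigma(K')| \in \{0,8\}$ by the first paragraph.

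The main obstacle is precisely this last step: establishing the correct mapping-cone formula for the $d$-invariants of integral surgeries along knots in an \emph{arbitrary} L-space (in \cite{LMV} this is carried out only for $L(3,1)$, and the square-free determinant hypothesis is exactly what one exploits to reduce the general situation to a tractable one), and, within that formula, pinning the relevant correction term to $\{0,1\}$ by using that both ends of the surgery are L-spaces of the same order. A secondary, but genuinely necessary, technical point is the homological argument of the second paragraph: verifying that square-freeness of $m$ forces the surgery to be integral and correctly identifying the spin structures transported across the surgery cobordism.
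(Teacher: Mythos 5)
Your overall route is the one the paper indicates for this theorem: pass to the branched double covers $Y=\Sigma_2(S^3,K)$ and $Y'=\Sigma_2(S^3,K')$, which are L-spaces of order $m$ because $K$ and $K'$ are quasi-alternating \cite{OS:Branched}; convert the signature statement into a statement about the $d$-invariants of the unique spin structures via the relation $2d(\Sigma_2(S^3,K),\mathfrak{t}_0)=-\sigma(K)/2$ of Manolescu and Owens; lift the banding to a distance-one surgery by the Montesinos trick; and use square-freeness of $m$ to force the lifted knot to be null-homologous with surgery coefficient $\pm 1$ and to match up the spin structures. All of this is correct and is exactly the strategy of \cite{LMV} transported to this setting, so the architecture of your argument is sound.

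The gap is at the decisive step, and you have in fact pointed at it yourself: the claim that the relevant correction term satisfies $V\le 1$, equivalently that $|d(Y,\mathfrak{t}_0)-d(Y',\mathfrak{t}_0')|\le 2$, is asserted rather than proved. The ``rigidity'' heuristic is not an argument, and the analogy with $S^3$ does not transfer. In $S^3$, the fact that a $\pm1$-surgery yielding an L-space forces the knot to be the unknot or a trefoil rests on the theorem that an L-space surgery slope is at least $2g-1$ together with the fiberedness and strong quasipositivity of L-space knots; no such classification is available for a null-homologous knot $J$ in an arbitrary L-space $Y$ with $|H_1(Y)|=m$. Nor can one appeal to the L-space-interval results for knots with multiple L-space fillings, since knowing only that the meridional filling and a single distance-one filling are L-spaces is below the distance-two threshold those results require. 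As written, your argument therefore establishes only that $d(Y,\mathfrak{t}_0)-d(Y',\mathfrak{t}_0')$ is an even integer whose sign is determined by the framing, not that its absolute value is at most $2$; the conclusion $|\sigma(K)-\sigma(K')|\in\{0,8\}$ does not yet follow. Closing this requires genuine input --- e.g., the $d$-invariant surgery formulas of \cite{LMV} generalizing \cite{NiWu} for the two-handle trace together with the negative-definite cobordism inequality of \cite{OSAbs} applied to the spin$^c$ structures restricting to $\mathfrak{t}_0$ and $\mathfrak{t}_0'$ (which have $c_1^2=-1$), and an argument pinning the residual correction term to $\{0,1\}$. Whatever you supply must also be sharp: $T(2,5)$ and its mirror realize $|\sigma-\sigma'|=8$ \cite{Zekovic}, so the argument has to permit $V=1$ while excluding $V\ge 2$.
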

Our criterion for non-coherent banding (Theorem \ref{thm:sigdif}) is reminiscent of Murasugi's criterion for coherent banding (Theorem \ref{mur} in section \ref{subsec:trefoil}). It is also similar in spirit to the work of Kanenobu and Kanenobu-Moriuchi, described in Section \ref{sec:bandsurgery}, although our statement is not focused on the case of bandings relating a knot to the unknot.

We remark that given $K$ and $K'$ quasi-alternating such that $\det(K) \neq \det(K')$ (i.e. the conditions of Theorem \ref{thm:sigdif} do not hold), a banding may change the signature of a knot by arbitrary amounts. For example, take the $T(2, n)$ torus knot, which has signature $1-n$, and can be unknotted by one smoothing of a crossing. 

The following statement is a direct corollary to the proof of Theorem \ref{thm:sigdif}.
\begin{corollary}
\label{cor:lowcrossing}
Excluding $8_{19}$, let $K$ and $K'$ be knots of eight or fewer crossings with $\det(K) = m = \det(K')$ for $m$ a square-free integer. If there exists a banding from $K$ to $K'$, then $|\sigma(K) - \sigma(K')| = 8$ or $0$.
\end{corollary}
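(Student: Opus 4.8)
The plan is to deduce Corollary~\ref{cor:lowcrossing} directly from Theorem~\ref{thm:sigdif}: the only extra ingredient is the observation that, with the single exception of $8_{19}$, every knot of at most eight crossings is quasi-alternating. Indeed, every knot with at most seven crossings is alternating, and the only non-alternating knots of eight crossings are $8_{19}$, $8_{20}$, and $8_{21}$. Since alternating knots are quasi-alternating, it remains only to examine these three, and then to invoke Theorem~\ref{thm:sigdif}.

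First, $8_{19} = T(3,4)$ is removed from the statement by hypothesis, and this exclusion is genuinely needed: $8_{19}$ is not quasi-alternating (for instance, it is thick in Khovanov homology), while $\det(8_{19}) = 3$ is square-free, so Theorem~\ref{thm:sigdif} cannot be applied to it. Second, $\det(8_{20}) = 9$ is not square-free, so no pair of knots with $\det(K) = \det(K') = \det(8_{20})$ can satisfy the hypothesis of the corollary, and $8_{20}$ may be ignored regardless of its quasi-alternating status. Third, $8_{21}$ is quasi-alternating --- a finite check against the knot tables --- and $\det(8_{21}) = 15$ is square-free, so Theorem~\ref{thm:sigdif} applies to it directly. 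Hence, if $K$ and $K'$ are knots of at most eight crossings, both different from $8_{19}$, with $\det(K) = \det(K') = m$ for a square-free integer $m$, then $K$ and $K'$ are both quasi-alternating knots of equal square-free determinant, and Theorem~\ref{thm:sigdif}, applied to the banding relating them (which is necessarily non-coherent, since $K$ and $K'$ are knots), yields $|\sigma(K) - \sigma(K')| \in \{0, 8\}$, as claimed.

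The content beyond Theorem~\ref{thm:sigdif} is thus entirely the finite bookkeeping above: listing the non-alternating knots of small crossing number, recording their determinants, and checking the quasi-alternating status of $8_{21}$ (and, if one wishes, of $8_{20}$). The one point that dictates the precise form of the statement --- and the place where I would be most careful --- is that among knots of at most eight crossings, $8_{19}$ is the \emph{only} one that is simultaneously non-quasi-alternating and of square-free determinant; the other non-alternating candidate for a failure, $8_{20}$, is harmless purely because $\det(8_{20}) = 9$ is not square-free. Everything else is an immediate specialization of Theorem~\ref{thm:sigdif}, so I expect no real obstacle beyond this census check.
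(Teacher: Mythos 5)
Your proposal is correct, and it fills in exactly the census that the paper leaves implicit: the survey offers no written proof, saying only that the statement is ``a direct corollary to the proof of Theorem~\ref{thm:sigdif},'' and your reduction --- all knots with at most seven crossings are alternating hence quasi-alternating, $8_{19}$ is excluded by hypothesis, $8_{20}$ is excluded because $\det(8_{20})=9$ is not square-free, and $8_{21}$ is quasi-alternating with square-free determinant $15$ --- is the natural way to make that precise. The only nuance is that you derive the corollary from the \emph{statement} of Theorem~\ref{thm:sigdif} rather than its proof; the paper's phrasing hints that the original argument in \cite{LMV} runs the $d$-invariant computation directly on the branched double covers of the low-crossing knots, but your route is logically sufficient and arguably cleaner given the quasi-alternating census.
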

One special case where the condition that $\det(K)=\det(K')$ is met is the case of a \emph{chirally cosmetic banding}. 
\begin{definition}
A  \emph{chirally cosmetic banding} is a non-coherent band surgery relating a knot $K$ with its mirror image $K^*$. 
\end{definition}
We then asked how Theorem \ref{thm:sigdif} applies to the class of $T(2, n)$ torus knots. 
\begin{corollary}
\label{cor:torus}
The only nontrivial torus knot $T(2, n)$, with $n$ square free, admitting a chirally cosmetic banding is $T(2, 5)$.
\end{corollary}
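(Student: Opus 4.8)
The plan is to read the obstruction straight off Theorem~\ref{thm:sigdif}, applied to the pair consisting of $T(2,n)$ and its mirror, and then to record that the surviving case $n=5$ actually occurs.

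\emph{Setting up the hypotheses.} Since $T(2,n)$ is a knot exactly when $n$ is odd, I take $n$ odd throughout. Because $T(2,-n) = T(2,n)^*$ and a band from $K$ to $K^*$ can be mirrored to a band from $K^*$ to $K$, the property ``$T(2,n)$ admits a chirally cosmetic banding'' is unchanged under $n \mapsto -n$; so I may assume $n \geq 3$. The standard $n$-crossing diagram of $T(2,n)$ is alternating, hence $T(2,n)$ and $T(2,n)^*$ are quasi-alternating by the remark following Definition~\ref{def:qa}. Finally $\det(T(2,n)) = n = \det(T(2,n)^*)$, and this common determinant is square-free by hypothesis. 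Thus Theorem~\ref{thm:sigdif} applies with $K = T(2,n)$, $K' = T(2,n)^*$ and $m = n$.

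\emph{The obstruction.} Suppose $T(2,n)$ admits a chirally cosmetic banding, that is, a band surgery relating $T(2,n)$ to $T(2,n)^*$ (automatically non-coherent, as it joins a knot to a knot). Theorem~\ref{thm:sigdif} gives $|\sigma(T(2,n)) - \sigma(T(2,n)^*)| \in \{0, 8\}$. Mirroring negates the signature, so $\sigma(T(2,n)^*) = -\sigma(T(2,n))$ and the left-hand side equals $2\,|\sigma(T(2,n))| = 2(n-1)$, using $\sigma(T(2,n)) = 1 - n$ for $n \geq 1$. Hence $2(n-1) \in \{0, 8\}$, which forces $n \in \{1, 5\}$; as $n = 1$ yields the unknot, the only nontrivial torus knot $T(2,n)$ with $n$ square-free that is not excluded is $T(2,5) = 5_1$.

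\emph{Realization and the main point.} It remains to check that $T(2,5)$ does in fact admit a chirally cosmetic banding, which I would do by exhibiting an explicit band taking $5_1$ to $5_1^*$, in the spirit of the band-move figures of Section~\ref{subsec:trefoil}. The obstruction half of the proof is immediate once the quasi-alternating and square-free-determinant hypotheses of Theorem~\ref{thm:sigdif} are in place, so the only genuine work is producing---and drawing clearly---the realizing band for $n = 5$; that is the step I expect to require the most care.
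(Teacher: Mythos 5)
Your argument is correct and is essentially the paper's own: apply Theorem~\ref{thm:sigdif} to the pair $T(2,n)$, $T(2,n)^*$ (alternating, hence quasi-alternating, with common square-free determinant $n$), use $\sigma(T(2,n)^*)=-\sigma(T(2,n))=n-1$ to force $2(n-1)\in\{0,8\}$, and realize the surviving case $n=5$ by an explicit band (which the paper attributes to Zekovi\'c). No gaps; the only difference is that you propose to draw the realizing band yourself rather than cite the known one.
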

The banding that relates $T(2, 5)$ with its mirror $T(2, -5)$ was first found by Zekovi\'c \cite{Zekovic}. Livingston has recently informed us that that the square-free condition of Corollary \ref{cor:torus} can be removed, with a possible exception when $n=9$ \cite{Livingston}. Indeed, in a recent preprint \cite{Livingston}, Livingston observes that if $K$ admits a chirally cosmetic banding, then $K \# K$ bounds a ribbon M\"obius band in the four-ball. By applying Casson-Gordon theory for non-orientable surfaces in the four-ball \cite{LivingstonGilmer}, one may then achieve an improved obstruction for the $T(2, n)$ torus knots, amongst some other knots. The status of the knot $T(2, 9)$ remains unknown.
\begin{question}
Does the torus knot $T(2, 9)$ admit a chirally cosmetic banding?
\end{question}
Another possible avenue for approaching non-coherent band surgery is to consider the bounds on nonorientable 3- and 4-genus coming from Heegaard Floer homology. 
The \emph{nonorientable 3-genus} or \emph{3-dimensional crosscap number} $\gamma_3(K)$ of $K$ in $S^3$ is the minimal first Betti number $b_1(F)$ of any surface $F$, possibly non-orientable, bounded by $K$. Likewise the \emph{nonorientable 4-genus} or \emph{smooth 4-dimensional crosscap number} $\gamma_4(K)$ is the minimal $b_1(F)$ of any surface $F$, possibly non-orientable, properly and smoothly embedded in the four-ball and bounded by $K$. Obviously $\gamma_4(K)\leq \gamma_3(K)$. The \emph{upsilon invariant} $\upsilon(K)$ is a Heegaard Floer theoretic concordance invariant defined by Ozsv\'ath, Stipsicz, and Szab\o~\cite{OSS} that gives a lower bound on the nonorientable 4-genus. 
\begin{theorem}\cite[Theorem 1.2]{OSS}
Let $K$ be a knot in $S^3$. Then 
\[
	|\upsilon(K) - \frac{\sigma(K)}{2}|\leq \gamma_4(K).
\]
\end{theorem}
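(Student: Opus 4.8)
The plan is to bound $\upsilon(K)$ and $\tfrac12\sigma(K)$ separately against a common ``framing'' quantity attached to a non-orientable spanning surface, and then combine them by the triangle inequality. First I would fix a connected surface $F$, properly and smoothly embedded in $B^4$, with $\partial F=K$ and $b_1(F)=\gamma_4(K)=:n$, let $e(F)$ be its normal Euler number (defined via a push-off of $K$ into $F$), and place $F$ in Morse position for the radial function on $B^4$ with a single index-$0$ critical point and $n$ index-$1$ critical points --- equivalently, present $F$ as a disk with $n$ bands attached to an unknot. Sweeping inward from $S^3=\partial B^4$ then exhibits $F$ as an unoriented band cobordism in $S^3\times[0,1]$ from the unknot $U$ to $K$, built from $n$ successive band moves, and I would extract the two desired estimates from this cobordism together with the two numbers $n=b_1(F)$ and $e(F)$ that it carries.

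The first ingredient is a four-dimensional Gordon--Litherland-type inequality: for any such $F$,
\[
   |\,\sigma(K) + \tfrac12 e(F)\,| \le b_1(F),
\]
which one obtains, as in the classical three-dimensional identity, by bounding the signature of the Gordon--Litherland pairing on $H_1(F)$ by its rank --- or, equivalently, from $|\sigma(W)|\le b_2(W)$ applied to the double branched cover $W=\Sigma_2(B^4,F)$, whose second Betti number is $b_1(F)$ and whose signature is $\sigma(K)+\tfrac12 e(F)$. The second, genuinely new, ingredient is a Floer-theoretic inequality of the shape
\[
   |\,\upsilon(K) + \tfrac14 e(F)\,| \le \tfrac12\, b_1(F),
\]
which I would prove by composing the maps induced on the $t$-modified knot Floer complexes of the intermediate links by the $n$ elementary pieces of the band cobordism, and bounding the resulting grading shifts: the Euler characteristic of the cobordism should contribute the term proportional to $b_1(F)$, and the normal framings of the bands the term proportional to $e(F)$. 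Dividing the first display by $2$ and applying the triangle inequality then gives
\[
  |\,\upsilon(K)-\tfrac12\sigma(K)\,| \le |\,\upsilon(K)+\tfrac14 e(F)\,| + |\,\tfrac12\sigma(K)+\tfrac14 e(F)\,| \le b_1(F),
\]
and taking the infimum over all such surfaces $F$ yields the theorem. (The overall sign and normalization conventions for $e(F)$, $\upsilon$, and $\sigma$ must be chosen consistently; the final combination is insensitive to them.)

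The hard part will be the Floer inequality. The obstruction is precisely that \emph{neither} $\upsilon$ nor $\sigma$ is controlled on its own by a band cobordism --- only the combination with the Euler number is --- so the naive strategy of running the oriented cobordism bound one band at a time cannot succeed. The torus knots show this sharply: $T(2,q)$ bounds a M\"obius band in $S^3$, so $\gamma_4(T(2,q))=1$, yet, with suitable chirality conventions, $\sigma(T(2,q))=1-q$ and $\upsilon(T(2,q))=\tfrac12(1-q)$ both diverge with $q$, while $e(F)=\pm 2q$ for the M\"obius band exactly compensates, so that both of the displayed inequalities hold with equality. One must therefore set up the knot Floer cobordism maps for \emph{non-coherent} band moves on the $t$-modified complexes and pin down the framing-dependent part of the grading shift exactly; this is the technical core of \cite{OSS}, and it is what packages Murasugi's signature bound for coherent band surgery (Theorem \ref{mur}) and, in the case $\gamma_4(K)=1$, Yasuhara's $\sigma$--$\arf$ criterion (Theorem \ref{yas}) into a single statement valid for all non-orientable genera. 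A secondary, purely bookkeeping, matter is the handling of the links that appear at the intermediate stages: one either uses the link versions of the relevant Floer complexes with their standard normalizations, or reorders the bands of $F$ so that the cobordism passes only through knots.
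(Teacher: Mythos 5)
First, a point of reference: the survey does not prove this statement at all---it is quoted from \cite{OSS}---so the only proof to measure your proposal against is the one in that reference. Your overall architecture does match it in shape: bound $\sigma(K)+\tfrac12 e(F)$ by $b_1(F)$ via the signature and second Betti number of the double branched cover $\Sigma_2(B^4,F)$ (this part is the classical Gordon--Litherland/$G$-signature computation and is fine), bound a combination of $\upsilon(K)$ and $e(F)$ by $\tfrac12 b_1(F)$ via Floer theory, and eliminate $e(F)$ with the triangle inequality. Your sanity check on $T(2,q)$ and its M\"obius band, where $e(F)$ exactly compensates the divergence of $\upsilon$ and $\sigma$, is also the right one.

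The proposal nevertheless has a genuine gap: the inequality $\bigl|\upsilon(K)+\tfrac14 e(F)\bigr|\le\tfrac12 b_1(F)$ is where the entire content of the theorem lives, and you do not prove it---you state it ``of the shape'' and describe a plan (compose cobordism maps on the $t$-modified complexes and bound the grading shifts). Constructing those maps for \emph{unoriented} saddle moves and pinning down the framing-dependent part of the grading shift is precisely the technical core of \cite{OSS}; everything else in your argument is classical, so until that inequality is established nothing has been proved. Two of the steps you wave off as bookkeeping are also not automatic: a minimizer of $b_1$ need not be isotopic to a disk with $n$ bands and no births, so the inward sweep generically passes through links with split unknotted components; and the bands cannot in general be reordered so that every intermediate stage is a knot, while $\upsilon$ as used here is a knot invariant---so the link stages must genuinely be handled, not deferred. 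Finally, the aside that the theorem ``packages'' Yasuhara's criterion (Theorem \ref{yas}) overreaches: the Arf invariant is not determined by $\upsilon$ and $\sigma$, and this bound does not recover that criterion, though both live in the same circle of ideas.
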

Livingston's observation, when combined with the lower bound from $\upsilon$, implies the following.
\begin{theorem}\cite[Theorem 19]{Livingston}
If $K$ admits a chirally cosmetic banding, then $\upsilon(K)=\sigma(2)/2$.
\end{theorem}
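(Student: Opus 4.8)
The plan is to combine the two ingredients assembled just above: Livingston's observation that a chirally cosmetic banding on $K$ forces $K \# K$ to bound a M\"obius band in the four-ball \cite{Livingston}, and the Ozsv\'ath--Stipsicz--Szab\'o inequality $|\upsilon(J) - \sigma(J)/2| \le \gamma_4(J)$ \cite{OSS}. Everything else is additivity under connected sum together with an elementary parity observation.

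First I would record the topological input. By Livingston's observation the knot $K \# K$ bounds a smoothly (indeed, ribbonly) embedded M\"obius band $F$ in $B^4$; here we use only that $F$ is smooth. Since $b_1(F)=1$, this gives $\gamma_4(K\#K)\le 1$. Then I apply the $\upsilon$ bound to $J = K\#K$:
\[
	\left|\, \upsilon(K\#K) - \tfrac12\sigma(K\#K) \,\right| \;\le\; \gamma_4(K\#K) \;\le\; 1 .
\]
Next I invoke additivity. The signature is additive under connected sum, so $\sigma(K\#K)=2\sigma(K)$; and $\upsilon(K)=\Upsilon_K(1)$ is additive because the upsilon function of \cite{OSS} is a connected-sum homomorphism, so $\upsilon(K\#K)=2\upsilon(K)$. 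Substituting yields $\left|\,\upsilon(K)-\tfrac12\sigma(K)\,\right|\le \tfrac12$. Finally, $\upsilon(K)=\Upsilon_K(1)$ is an integer and $\sigma(K)$ is even, so $\upsilon(K)-\tfrac12\sigma(K)$ is an integer of absolute value at most $\tfrac12$; hence it is $0$ and $\upsilon(K)=\tfrac12\sigma(K)$, as claimed.

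The deduction itself is short, so the ``main obstacle'' is really a matter of feeding the cited results the right objects rather than proving anything new: one must apply the OSS inequality to $K\#K$ (whose crosscap number is genuinely bounded by the M\"obius band) rather than to, say, $K\#\overline{K}$, which is slice and carries no information; and one must use that $\upsilon$ takes integer values, so that the final $\tfrac12$-gap closes exactly. All of the genuine work is front-loaded into Livingston's construction of the ribbon M\"obius band bounded by $K\#K$, which we are taking as given (and which in turn relies on Casson--Gordon theory for non-orientable surfaces in the four-ball \cite{LivingstonGilmer}, as noted above), and into the proof of the OSS bound itself.
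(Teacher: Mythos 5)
Your argument is correct and is precisely the deduction the paper intends when it says Livingston's observation ``combined with the lower bound from $\upsilon$'' yields the theorem: the ribbon M\"obius band gives $\gamma_4(K\#K)\le 1$, the Ozsv\'ath--Stipsicz--Szab\'o inequality applied to $K\#K$ together with additivity of $\upsilon$ and $\sigma$ under connected sum gives $|\upsilon(K)-\tfrac12\sigma(K)|\le\tfrac12$, and integrality of $\upsilon(K)-\tfrac12\sigma(K)$ (using that $\sigma$ of a knot is even) forces it to vanish. One minor caveat: the invariant appearing in the nonorientable genus bound is the unoriented $\upsilon$ of Ozsv\'ath--Stipsicz--Szab\'o, whose integrality and additivity are theorems of theirs and should be cited as such; your parenthetical identification $\upsilon(K)=\Upsilon_K(1)$ with the Upsilon function is not needed for the argument and is not obviously the correct statement, though nothing downstream depends on it.
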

We note that for alternating and quasi-alternating knots, it is always true that $\upsilon(K)=\sigma(2)/2$. However, for non-quasi-alternating knots, this provides a good obstruction to chirally cosmetic bandings. The literature on nonorientable 3- and 4-genus is relatively sparse, compared with its orientable counterpart. Additional work in this direction (particularly in dimension four) can be found in \cite{Batson, JabukaKelly, LivingstonGilmer, OSS, Yasuhara}.   

\section{The frequency of chirally cosmetic bandings, as assessed by numerical simulations}
\label{sec:numerical}
The primary outcome of our analytical work in Theorem \ref{thm:torus}, Theorem \ref{thm:sigdif}, and their corollaries is to establish mathematical obstructions to the existence of a banding relating a given pair of knots. That is to say, we aim to determine whether a banding cannot possibly occur, and if possible, find examples of bands when such obstructions are not available. We complement the analytical work with numerical simulations of non-coherent banding on cubic lattice knots for which the focus is to determine whether a band surgery relating two knots exists, and if so, determine the relative frequency of such a transition. This study is an ongoing effort. We will discuss here the preliminary findings pertaining to chirally cosmetic bandings, as reported in \cite{MV}.

\subsection{Cubic lattice knots.} A \emph{cubic lattice knot} is an embedding of a knot in the simple cubic lattice, $(\R\times \Z\times \Z) \cup (\Z\times\R\times \Z)\cup (\Z\times\Z\times\R)$, with vertices contained on the integer lattice $\Z^3$. Any tame knot $K$ admits a cubic lattice representative, i.e. a self-avoiding polygon (SAP) with knot type $K$, called a \emph{lattice conformation} of $K$. The length of a fixed conformation is the number of unit steps in the embedding. 

In computational knot theory, particularly in numerical studies of DNA topology, it is common practice to model knots and links as random SAPs in $\mathbb{R}^3$ (e.g. freely jointed chains, uniform random polygons, wormlike chains) or in $\Z^3$ (i.e. lattice knots and links). 
While polymer models in $\mathbb{R}^3$ better approximate the physical properties of DNA, modeling DNA as lattice polygons presents a number of analytical advantages. 
There is extensive analytical work on knotting and linking in $\Z^3$, and a solid mathematical foundation for the development of algorithms that generate equilibrium distributions of SAPs in $\Z^3$ \cite{MadrasSlade, WangLandau}. Also, lattice polygons incorporate inherently a volume exclusion, which can be used to model the effective diameter of DNA. More accurate modeling of DNA as a SAP in $\Z^3$ is achieved by incorporating excluded volume effects. In collaboration with C. Soteros' group at the University of Saskatchewan \cite{Schmirler}, we have work in progress in this direction based on the work of \cite{Tesi}.

\subsection{Chirally cosmetic bandings are rare.} Our analytical work suggests that chirally cosmetic bandings are unusual events. This is consistent with the behavior observed in our simulations of non-coherent banding in the cubic lattice. For each chiral knot type $K$ of up to eight crossings, we sampled three million reconnection events and produced a transition probability network to summarize the outcome. Each reconnection represented a non-coherent banding. The implementation was adapted from our published work in \cite{Stolz2017} (see Section \ref{methods} for more details). 

In our preliminary report, we found chirally cosmetic bandings to be exceedingly rare. Chirally cosmetic bandings were observed, with transition probabilities given in Table \ref{table:chiral}, for only three knots: $5_1$, $8_8$, and $8_{20}$. The band move relating $5_1$ and its mirror image was known to exist, and had been previously reported in \cite{Zekovic}. The band move for $8_8$ had not, to our knowledge, been explicitly reported in the literature but could be easily ascertained from the symmetric union presentation given in \cite{Lamm}. The banding exhibited for $8_8^*$ was newly discovered via our computer simulations. 

\begin{table}[t]

\caption{
\footnotesize{\textbf{Chirally cosmetic bandings are rare.} The relative likelihood of chirally cosmetic bandings for knots of up to eight crossings is given. The probabilities were computed out of $3\times 10^6$ total band moves performed on each knot type.
}
}
\begin{center}
  \begin{tabular}{ r  r  c  r  c  }
    \hline
    \emph{Knot} & $P(K-K^*) \times 10^{-5}$  &  \emph{Number observed}   \\
    \hline
    $5_1$ & 3.467 &  104  \\ 
    $5_1^*$ & 2.800 &   84   \\ 
    $8_8$ & 0 & 0  \\ 
    $8_8^*$ & 0.003 &  1  \\ 
    $8_{20}$ & 42.833  & 1285  \\ 
    $8_{20}^*$ & 46.400 &  1392 \\ 
    \hline
  \end{tabular}
\end{center}

\label{table:chiral}
\end{table}

\subsection{Methods.}
\label{methods}

In the experiment that we designed, we considered the set of 63 non-trivial prime knots of up to eight crossings, where we included both knots from each chiral pair. For each substrate knot $K$, we used the BFACF algorithm to generate large ensembles of cubic lattice representatives of $K$. The BFACF algorithm is a Markov chain Monte Carlo algorithm frequently used in the study of self-avoiding polygons \cite{MadrasSlade}. It is ergodic in the state space of all cubic lattice representatives of a self-avoiding polygon. The algorithm acts by perturbing the chain via a set of small moves, called the BFACF moves. These moves preserve the isotopy type of the knotted chain, but may change the length. Our particular implementation of BFACF employed a composite Markov chain (CMC) Monte Carlo process in order to more effectively randomize samples.

\begin{figure}
\includegraphics[width=0.6\textwidth]{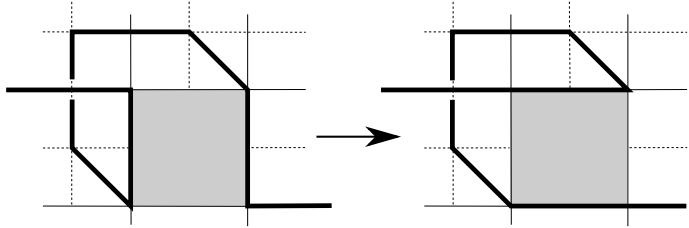}
\label{fig:reconnectionsites}
\caption{An example of reconnection sites in the cubic lattice.}
\end{figure}

Having generated large, independent ensembles of cubic lattice representatives of a fixed knot type, we then searched for reconnection sites, performed recombination, and identified the resulting product knots. In the cubic lattice, a reconnection site is a pair of edges along the chain that are of a unit distance from each other and occupy the opposite edges of a square in the lattice. See Figure \ref{fig:reconnectionsites}. This was accomplished by extending our software package, originally developed for the case of coherent band surgery simulations in \cite{Stolz2017}, to the non-coherent case. We also similarly use batch mean analysis and ratio estimation techniques to ensure statistical robustness. For knot identification, we primarily used the HOMFLY-PT polynomial, with a second identification routine applied in the few cases when identification via the HOMFLY-PT polynomial was inconclusive. Extended information on the BFACF algorithm and our implementation can be found in \cite{MV}, \cite[Supplementary Materials]{Stolz2017}, \cite{MadrasSlade}.

\subsection{Future studies.}
The difference in chirality between a chiral knot $K$ and its mirror image $K^*$ is a well-defined concept. Thus, in \cite{MV}, we focused on bandings that relate a chiral knot $K$ with $K^*$. For an arbitrary pair of knots $K$ and $K'$, the relative difference in chirality is a quantity that must formally be defined and estimated by numerical simulations. A more extensive study of the chirality trends apparent in the transition probability networks associated to non-coherent banding and its biological relevance is the focus of a forthcoming report \cite{FMV}.  

\subsection*{Acknowledgements} We are especially grateful to our collaborator Tye Lidman. We also thank Michelle Flanner for her sustained efforts in carrying out the numerical studies described here and Chuck Livingston for sharing with us his new work on chirally cosmetic bandings.

\bibliographystyle{amsplain}
\bibliography{biblio}

\providecommand{\bysame}{\leavevmode\hbox to3em{\hrulefill}\thinspace}
\providecommand{\MR}{\relax\ifhmode\unskip\space\fi MR }
% \MRhref is called by the amsart/book/proc definition of \MR.
\providecommand{\MRhref}[2]{%
  \href{http://www.ams.org/mathscinet-getitem?mr=#1}{#2}
}
\providecommand{\href}[2]{#2}
\begin{thebibliography}{10}

\bibitem{AbeKanenobu}
Tetsuya Abe and Taizo Kanenobu, \emph{Unoriented band surgery on knots and
  links}, Kobe J. Math. \textbf{31} (2014), no.~1-2, 21--44. \MR{3307286}

\bibitem{AdamsCozz92}
David~E. Adams, Eugene~M. Shekhtman, E.~Lynn Zechiedrich, Molly~B. Schmid, and
  Nicholas~R. Cozzarelli, \emph{The role of topoisomerase {IV} in partitioning
  bacterial replicons and the structure of catenated intermediates in {DNA}
  replication}, Cell \textbf{71} (1992), no.~2, 277--288.

\bibitem{Andrewes}
F.~W. Andrewes, \emph{Studies in group-agglutination i. the salmonella group
  and its antigenic structure}, The Journal of Pathology and Bacteriology
  \textbf{25} (1922), no.~4, 505--521.

\bibitem{Arsuaga02}
J.~Arsuaga, M.~Vazquez, S.~Trigueros, D.~Sumners, and J.~Roca, \emph{Knotting
  probability of {DNA} molecules confined in restricted volumes: {DNA} knotting
  in phage capsids}, Proc. Natl. Acad. Sci. U.S.A. \textbf{99} (2002), no.~8,
  5373--5377.

\bibitem{Baker:SurgeryI}
Kenneth~L. Baker, \emph{Surgery descriptions and volumes of {B}erge knots. {I}.
  {L}arge volume {B}erge knots}, J. Knot Theory Ramifications \textbf{17}
  (2008), no.~9, 1077--1097. \MR{2457837}

\bibitem{Baker:SurgeryII}
\bysame, \emph{Surgery descriptions and volumes of {B}erge knots. {II}.
  {D}escriptions on the minimally twisted five chain link}, J. Knot Theory
  Ramifications \textbf{17} (2008), no.~9, 1099--1120. \MR{2457838}

\bibitem{BSC}
Jonathan Bath, David~J Sherratt, and Sean~D Colloms, \emph{Topology of {X}er
  recombination on catenanes produced by lambda integrase}, Journal of
  Molecular Biology \textbf{289} (1999), no.~4, 873 -- 883.

\bibitem{Batson}
Joshua Batson, \emph{Nonorientable slice genus can be arbitrarily large}, Math.
  Res. Lett. \textbf{21} (2014), no.~3, 423--436. \MR{3272020}

\bibitem{berge}
John Berge, \emph{Some knots with surgeries yielding lens spaces},
  {U}npublished {M}anuscript, 1990.

\bibitem{BleilerLitherland}
Steven~A. Bleiler and Richard~A. Litherland, \emph{Lens spaces and {D}ehn
  surgery}, Proc. Amer. Math. Soc. \textbf{107} (1989), no.~4, 1127--1131.
  \MR{984783}

\bibitem{Brasher}
Reuben Brasher, Rob~G. Scharein, and Mariel Vazquez, \emph{New biologically
  motivated knot table}, Biochemical Society Transactions \textbf{41} (2013),
  no.~2, 606--611.

\bibitem{BuckIshihara2015}
Dorothy Buck and Kai Ishihara, \emph{Coherent band pathways between knots and
  links}, J. Knot Theory Ramifications \textbf{24} (2015), no.~2, 1550006, 21.
  \MR{3334659}

\bibitem{BIRS}
Dorothy Buck, Kai Ishihara, Matt Rathbun, and Koya Shimokawa, \emph{Band
  surgeries and crossing changes between fibered links}, J. Lond. Math. Soc.
  (2) \textbf{94} (2016), no.~2, 557--582. \MR{3556454}

\bibitem{BuckMauricio}
Dorothy Buck and Mauro Mauricio, \emph{Connect sum of lens spaces surgeries:
  application to {H}in recombination}, Math. Proc. Cambridge Philos. Soc.
  \textbf{150} (2011), no.~3, 505--525. \MR{2784772}

\bibitem{BZ}
Gerhard Burde, Heiner Zieschang, and Michael Heusener, \emph{Knots}, extended
  ed., De Gruyter Studies in Mathematics, vol.~5, De Gruyter, Berlin, 2014.
  \MR{3156509}

\bibitem{CH-2}
Hugo Cabrera~Ibarra and David~A. Liz\'{a}rraga~Navarro, \emph{An algorithm
  based on 3-braids to solve tangle equations arising in the action of {G}in
  {DNA} invertase}, Appl. Math. Comput. \textbf{216} (2010), no.~1, 95--106.

\bibitem{CH-1}
Hugo Cabrera-Ibarra and David~A. Liz\'{a}rraga-Navarro, \emph{Braid solutions
  to the action of the {G}in enzyme}, Journal of Knot Theory and Its
  Ramifications \textbf{19} (2010), no.~08, 1051--1074.

\bibitem{CGLS}
Marc Culler, C.~McA. Gordon, J.~Luecke, and Peter~B. Shalen, \emph{Dehn surgery
  on knots}, Ann. of Math. (2) \textbf{125} (1987), no.~2, 237--300.
  \MR{881270}

\bibitem{DIMS}
Isabel~K. Darcy, Kai Ishihara, Ram~K. Medikonduri, and Koya Shimokawa,
  \emph{Rational tangle surgery and {X}er recombination on catenanes}, Algebr.
  Geom. Topol. \textbf{12} (2012), no.~2, 1183--1210. \MR{2928910}

\bibitem{D1}
Yuanan Diao, \emph{The knotting of equilateral polygons in {${\bf R}^3$}}, J.
  Knot Theory Ramifications \textbf{4} (1995), no.~2, 189--196. \MR{1331746}

\bibitem{DPS}
Yuanan Diao, Nicholas Pippenger, and De~Witt Sumners, \emph{On random knots},
  J. Knot Theory Ramifications \textbf{3} (1994), no.~3, 419--429, Random
  knotting and linking (Vancouver, BC, 1993). \MR{1291869}

\bibitem{Ernst-1}
C.~Ernst, \emph{Tangle equations}, J. Knot Theory Ramifications \textbf{5}
  (1996), no.~2, 145--159. \MR{1395775}

\bibitem{Ernst-2}
\bysame, \emph{Tangle equations. {II}}, J. Knot Theory Ramifications \textbf{6}
  (1997), no.~1, 1--11. \MR{1442176}

\bibitem{ES-1990}
C.~Ernst and D.~W. Sumners, \emph{A calculus for rational tangles: applications
  to {DNA} recombination}, Math. Proc. Cambridge Philos. Soc. \textbf{108}
  (1990), no.~3, 489--515. \MR{1068451}

\bibitem{ES-1999}
\bysame, \emph{Solving tangle equations arising in a {DNA} recombination
  model}, Math. Proc. Cambridge Philos. Soc. \textbf{126} (1999), no.~1,
  23--36. \MR{1681651}

\bibitem{EM}
Claus Ernst and Anthony Montemayor, \emph{Nullification numbers of knots with
  up to 10 crossings}, J. Knot Theory Ramifications \textbf{25} (2016), no.~7,
  1650037, 20. \MR{3513937}

\bibitem{FMV}
Michelle Flanner, Allison~H. Moore, and Mariel Vazquez, \emph{Reconnection and
  chirality in cubic lattice knots}, Preprint, 2018.

\bibitem{FW}
H.~L. Frisch and E.~Wasserman, \emph{Chemical topology}, Journal of the
  American Chemical Society \textbf{83} (1961), no.~18, 3789--3795.

\bibitem{LivingstonGilmer}
Patrick~M. Gilmer and Charles Livingston, \emph{The nonorientable 4-genus of
  knots}, J. Lond. Math. Soc. (2) \textbf{84} (2011), no.~3, 559--577.
  \MR{2855790}

\bibitem{GodaTeragaito}
Hiroshi Goda and Masakazu Teragaito, \emph{Dehn surgeries on knots which yield
  lens spaces and genera of knots}, Math. Proc. Cambridge Philos. Soc.
  \textbf{129} (2000), no.~3, 501--515. \MR{1780501}

\bibitem{Gordon}
Cameron Gordon, \emph{Dehn surgery and 3-manifolds}, Low dimensional topology,
  IAS/Park City Math. Ser., vol.~15, Amer. Math. Soc., Providence, RI, 2009,
  pp.~21--71. \MR{2503492}

\bibitem{gottesman1971}
M~E Gottesman and R~A Weisberg, \emph{Prophage insertion and excision}, Cold
  Spring Harbor Monograph Archive \textbf{2} (1971), 113--138.

\bibitem{Grainge2007}
I.~Grainge, M.~Bregu, M.~Vazquez, V.~Sivanathan, S.~C. Ip, and D.~J. Sherratt,
  \emph{Unlinking chromosome catenanes in vivo by site-specific recombination},
  EMBO J. \textbf{26} (2007), no.~19, 4228--4238.

\bibitem{Greene}
Joshua~Evan Greene, \emph{The lens space realization problem}, Ann. of Math.
  (2) \textbf{177} (2013), no.~2, 449--511. \MR{3010805}

\bibitem{Hedden}
Matthew Hedden, \emph{On {F}loer homology and the {B}erge conjecture on knots
  admitting lens space surgeries}, Trans. Amer. Math. Soc. \textbf{363} (2011),
  no.~2, 949--968. \MR{2728591}

\bibitem{HeddenWatson}
Matthew Hedden and Liam Watson, \emph{On the geography and botany of knot
  {F}loer homology}, Selecta Math. (N.S.) \textbf{24} (2018), no.~2, 997--1037.
  \MR{3782416}

\bibitem{heichmanjohnson1990}
Karen~A. Heichman and Reid~C. Johnson, \emph{The {H}in invertasome:
  Protein-mediated joining of distant recombination sites at the enhancer},
  Science \textbf{249} (1990), no.~4968, 511--517.

\bibitem{Henderson}
Ian Henderson, Peter Owen, and James P.~Nataro, \emph{Molecular switches —
  the on and off of bacterial phase variation}, Molecular Microbiology
  \textbf{33} (1999), no.~5, 919 -- 932.

\bibitem{HS}
Mikami Hirasawa and Koya Shimokawa, \emph{Dehn surgeries on strongly invertible
  knots which yield lens spaces}, Proc Amer Math Soc \textbf{128} (2000),
  no.~11, 3445--3451. \MR{1676336 (2001b:57015)}

\bibitem{Hlatky}
Lynn Hlatky, Rainer~K. Sachs, Mariel Vazquez, and Michael~N. Cornforth,
  \emph{Radiation-induced chromosome aberrations: Insights gained from
  biophysical modeling}, BioEssays \textbf{24} (2002), no.~8, 714--723.

\bibitem{HNT}
Jim Hoste, Yasutaka Nakanishi, and Kouki Taniyama, \emph{Unknotting operations
  involving trivial tangles}, Osaka J. Math. \textbf{27} (1990), no.~3,
  555--566. \MR{1075165}

\bibitem{Ip2003}
Stephen C.~Y. Ip, Migena Bregu, Fran{\c{c}}ois-Xavier Barre, and David~J.
  Sherratt, \emph{Decatenation of {DNA} circles by {FtsK}-dependent {Xer}
  site-specific recombination}, The EMBO Journal \textbf{22} (2003), no.~23,
  6399--6407.

\bibitem{IshiharaShimokawa}
Kai Ishihara and Koya Shimokawa, \emph{Band surgeries between knots and links
  with small crossing numbers}, Prog Theor Phys Supplement \textbf{191} (2011),
  245--255.

\bibitem{ISV}
Kai Ishihara, Koya Shimokawa, and Mariel Vazquez, \emph{Site-specific
  recombination modeled as a band surgery: applications to {X}er
  recombination}, Discrete and topological models in molecular biology, Nat.
  Comput. Ser., Springer, Heidelberg, 2014, pp.~387--401. \MR{3220014}

\bibitem{JabukaKelly}
Stanislav Jabuka and Tynan Kelly, \emph{The nonorientable 4-genus for knots
  with 8 or 9 crossings}, Algebr. Geom. Topol. \textbf{18} (2018), no.~3,
  1823--1856. \MR{3784020}

\bibitem{Johnson2015}
Reid~C. Johnson, \emph{Site-specific {DNA} inversion by serine recombinases},
  Microbiology Spectrum \textbf{3} (2015), no.~1.

\bibitem{Jones}
V.~F.~R. Jones, \emph{On a certain value of the {K}auffman polynomial}, Comm.
  Math. Phys. \textbf{125} (1989), no.~3, 459--467. \MR{1022523}

\bibitem{Kanenobu}
Taizo Kanenobu, \emph{Band surgery on knots and links, {III}}, J. Knot Theory
  Ramifications \textbf{25} (2016), no.~10, 1650056, 12. \MR{3548474}

\bibitem{KanenobuMiyazawa}
Taizo Kanenobu and Yasuyuki Miyazawa, \emph{{$H(2)$}-unknotting number of a
  knot}, Commun. Math. Res. \textbf{25} (2009), no.~5, 433--460. \MR{2573402}

\bibitem{Kleckner2013aa}
Dustin Kleckner and William Irvine, \emph{Creation and dynamics of knotted
  vortices}, Nature Physics \textbf{9} (2013), 253--258.

\bibitem{Kleckner}
Dustin Kleckner and William T.~M. Irvine, \emph{Creation and dynamics of
  knotted vortices}, Nature Physics \textbf{9} (2013), 253 EP --.

\bibitem{KKI}
Dustin Kleckner, Louis Kauffman, and William Irvine, \emph{How superfluid
  vortex knots untie}, Nature Physics \textbf{12} (2015).

\bibitem{Laing}
Christian~E. Laing, Renzo~L. Ricca, and De~Witt~L. Sumners, \emph{Conservation
  of writhe helicity under anti-parallel reconnection}, Scientific Reports
  \textbf{5} (2015), 9224 EP --.

\bibitem{Lamm}
Christoph Lamm, \emph{The search for non-symmetric ribbon knots},
  arXiv:1710.06909 [math.GT], 2017.

\bibitem{LeBourg}
Pascal Le~Bourgeois and Fran{\c{c}}ois Cornet, \emph{Chromosome dimer
  resolution by site-specific recombination}, Brenner's Encyclopedia of
  Genetics (2013), 551--554.

\bibitem{Li2016}
L.~Li, J.~Zheng, H.~Peter, E.~Priest, H.~Cheng, L.~Guo, F.~Chen, and D.~Mackay,
  \emph{Magnetic reconnection between a solar filament and nearby coronal
  loops}, Nat. Phys. \textbf{12} (2016), 847--851.

\bibitem{Lickorish}
W.~B.~R. Lickorish, \emph{Unknotting by adding a twisted band}, Bull. London
  Math. Soc. \textbf{18} (1986), no.~6, 613--615. \MR{859958}

\bibitem{LickorishMillett}
W.~B.~R. Lickorish and K.~C. Millett, \emph{Some evaluations of link
  polynomials}, Comment. Math. Helv. \textbf{61} (1986), no.~3, 349--359.
  \MR{860127}

\bibitem{LMV}
Tye Lidman, Allison~H. Moore, and Mariel Vazquez, \emph{Distance one lens space
  fillings and band surgery on the trefoil knot}, arXiv:1710.07418 [math.GT],
  2017.

\bibitem{Liu81}
L~F Liu, J~L Davis, and R~Calendar, \emph{Novel topologically knotted {DNA}
  from bacteriophage {P4} capsids: studies with {DNA} topoisomerases}, Nucleic
  Acids Res \textbf{16} (1981), no.~9, 3979--3989.

\bibitem{Livingston}
Charles Livingston, \emph{Chiral smoothings of knots}, arXiv:1809.07619
  [math.GT]], 2018.

\bibitem{DM}
Delbr\"{u}ck M, \emph{Knotting problems in biology}, Proceedings of Symposia in
  Applied Mathematics \textbf{14} (1962), 55--63.

\bibitem{MadrasSlade}
Neal Madras and Gordon Slade, \emph{The self-avoiding walk}, Probability and
  its Applications, Birkh\"auser Boston, Inc., Boston, MA, 1993. \MR{1197356}

\bibitem{MerickelJohnson}
Stacy~K. Merickel and Reid~C. Johnson, \emph{Topological analysis of
  {H}in-catalysed {DNA} recombination in vivo and in vitro}, Molecular
  Microbiology \textbf{51} (2004), no.~4, 1143--1154.

\bibitem{Montesinos}
Jos{\'e}~M. Montesinos, \emph{Three-manifolds as {$3$}-fold branched covers of
  {$S^{3}$}}, Quart. J. Math. Oxford Ser. (2) \textbf{27} (1976), no.~105,
  85--94. \MR{0394630 (52 \#15431)}

\bibitem{MV}
Allison~H. Moore and Mariel Vazquez, \emph{A note on band surgery and the
  signature of a knot}, arXiv:1806.02440 [math.GT], 2018.

\bibitem{Murasugi}
Kunio Murasugi, \emph{On a certain numerical invariant of link types}, Trans.
  Amer. Math. Soc. \textbf{117} (1965), 387--422. \MR{0171275}

\bibitem{M}
\bysame, \emph{Knot theory \& its applications}, Modern Birkh\"{a}user
  Classics, Birkh\"{a}user Boston, Inc., Boston, MA, 2008, Translated from the
  1993 Japanese original by Bohdan Kurpita, Reprint of the 1996 translation
  [MR1391727]. \MR{2347576}

\bibitem{Nakajima}
K.~Nakajima, \emph{Calculation of the {H(2)}-unknotting numbers of knots to 10
  crossings}, Master's thesis, Yamaguchi University, Japan, 1997.

\bibitem{NiWu}
Yi~Ni and Zhongtao Wu, \emph{Cosmetic surgeries on knots in {$S^3$}}, J. Reine
  Angew. Math. \textbf{706} (2015), 1--17. \MR{3393360}

\bibitem{OSAbs}
Peter Ozsv\'ath and Zolt\'an Szab\'o, \emph{Absolutely graded {F}loer
  homologies and intersection forms for four-manifolds with boundary}, Adv.
  Math. \textbf{173} (2003), no.~2, 179--261. \MR{1957829}

\bibitem{OSlens}
\bysame, \emph{On knot {F}loer homology and lens space surgeries}, Topology
  \textbf{44} (2005), no.~6, 1281--1300. \MR{2168576}

\bibitem{OS:Branched}
\bysame, \emph{On the {H}eegaard {F}loer homology of branched double-covers},
  Adv. Math. \textbf{194} (2005), no.~1, 1--33. \MR{2141852}

\bibitem{OSS}
Peter~S. Ozsv\'{a}th, Andr\'{a}s~I. Stipsicz, and Zolt\'{a}n Szab\'{o},
  \emph{Unoriented knot {F}loer homology and the unoriented four-ball genus},
  Int. Math. Res. Not. IMRN (2017), no.~17, 5137--5181. \MR{3694597}

\bibitem{RasmussenLens}
Jacob Rasmussen, \emph{Lens space surgeries and a conjecture of {G}oda and
  {T}eragaito}, Geom. Topol. \textbf{8} (2004), 1013--1031. \MR{2087076}

\bibitem{Rasmussenthesis}
Jacob~Andrew Rasmussen, \emph{Floer homology and knot complements}, ProQuest
  LLC, Ann Arbor, MI, 2003, Thesis (Ph.D.)--Harvard University. \MR{2704683}

\bibitem{Rolfsen}
Dale Rolfsen, \emph{Knots and links}, Mathematics Lecture Series, vol.~7,
  Publish or Perish, Inc., Houston, TX, 1990, Corrected reprint of the 1976
  original. \MR{1277811}

\bibitem{Rybenkov93}
V~V Rybenkov, N~R Cozzarelli, and A~V Vologodskii, \emph{Probability of {DNA}
  knotting and the effective diameter of the {DNA} double helix}, Proceedings
  of the National Academy of Sciences \textbf{90} (1993), no.~11, 5307--5311.

\bibitem{Rybenkov1997}
Valentin~V. Rybenkov, Christian Ullsperger, Alexander~V. Vologodskii, and
  Nicholas~R. Cozzarelli, \emph{Simplification of dna topology below
  equilibrium values by type ii topoisomerases}, Science \textbf{277} (1997),
  no.~5326, 690--693.

\bibitem{ST}
Martin Scharlemann and Abigail Thompson, \emph{Link genus and the {C}onway
  moves}, Comment. Math. Helv. \textbf{64} (1989), no.~4, 527--535.
  \MR{1022995}

\bibitem{Schmirler}
Matthew Schmirler, \emph{Strand passage and knotting probabilities in an
  interacting self-avoiding polygon model}, 2012, Masters Thesis -- University
  of Saskatchewan.

\bibitem{Shaw93}
S~Y Shaw and J~C Wang, \emph{Knotting of a {DNA} chain during ring closure},
  Science \textbf{260} (1993), no.~5107, 533--536.

\bibitem{Shimokawa}
Koya Shimokawa, Kai Ishihara, Ian Grainge, David~J. Sherratt, and Mariel
  Vazquez, \emph{{F}ts{K}-dependent {X}er{CD}-{\it dif} recombination unlinks
  replication catenanes in a stepwise manner}, Proc Natl Acad Sci USA
  \textbf{110} (2013), no.~52, 20906--20911.

\bibitem{Stirling1988}
CJ~Stirling, G~Stewart, and DJ~Sherratt, \emph{Multicopy plasmid stability in
  escherichia coli requires host-encoded functions that lead to plasmid
  site-specific recombination}, Molecular and General Genetics MGG \textbf{214}
  (1988), no.~1, 80--84.

\bibitem{Stolz2017}
R.~Stolz, M.~Yoshida, R.~Brasher, M.~Flanner, K.~Ishihara, D.~Sherratt,
  K.~Shimokawa, and M.~Vazquez, \emph{Pathways of {DNA} unlinking: a story of
  stepwise simplification}, Scientific Reports \textbf{7} (2017), no.~1, 12420.

\bibitem{SW}
D~W Sumners and S~G Whittington, \emph{Knots in self-avoiding walks}, Journal
  of Physics A: Mathematical and General \textbf{21} (1988), no.~7, 1689.

\bibitem{Tesi}
M.~C. Tesi, E.~J. Janse~van Rensburg, E.~Orlandini, D.~W. Sumners, and S.~G.
  Whittington, \emph{Knotting and supercoiling in circular dna: A model
  incorporating the effect of added salt}, Phys. Rev. E \textbf{49} (1994),
  868--872.

\bibitem{Vazquez2005}
M.~Vazquez, S.~D. Colloms, and D.~Sumners, \emph{Tangle analysis of {X}er
  recombination reveals only three solutions, all consistent with a single
  three-dimensional topological pathway}, J. Mol. Biol. \textbf{346} (2005),
  no.~2, 493--504.

\bibitem{VazquezThesis}
M.~E. Vazquez, \emph{Tangle analysis of site-specific recombination: {G}in and
  {X}er systems}, 2000, Dissertation (Ph.D.)--Florida State University.

\bibitem{VazquezSumners}
Mariel Vazquez and De~Witt Sumners, \emph{Tangle analysis of {G}in
  site-specific recombination}, Math. Proc. Cambridge Philos. Soc. \textbf{136}
  (2004), no.~3, 565--582. \MR{2055047}

\bibitem{WangLandau}
Fugao Wang and D.~P. Landau, \emph{Efficient, multiple-range random walk
  algorithm to calculate the density of states}, Phys. Rev. Lett. \textbf{86}
  (2001), 2050--2053.

\bibitem{Wellenreuther}
Maren Wellenreuther and Louis Bernatchez, \emph{Eco-evolutionary genomics of
  chromosomal inversions}, Trends in Ecology \& Evolution \textbf{33} (2018),
  no.~6, 427--440.

\bibitem{Witte}
Shawn Witte, Michelle Flanner, and Mariel Vazquez, \emph{A symmetry motivated
  link table}, Preprint, 2018.

\bibitem{Witten1989}
Edward Witten, \emph{Quantum field theory and the jones polynomial},
  Communications in Mathematical Physics \textbf{121} (1989), no.~3, 351--399.

\bibitem{Witten:lectures}
\bysame, \emph{Two lectures on the {J}ones polynomial and {K}hovanov homology},
  Lectures on geometry, Clay Lect. Notes, Oxford Univ. Press, Oxford, 2017,
  pp.~1--27. \MR{3676591}

\bibitem{Yasuhara}
Akira Yasuhara, \emph{Connecting lemmas and representing homology classes of
  simply connected {$4$}-manifolds}, Tokyo J. Math. \textbf{19} (1996), no.~1,
  245--261. \MR{1391941}

\bibitem{Zech}
E.~Lynn Zechiedrich, Arkady~B. Khodursky, and Nicholas~R. Cozzarelli,
  \emph{Topoisomerase {IV}, not gyrase, decatenates products of site-specific
  recombination in {E}scherichia?coli}, Genes {\&} Development \textbf{11}
  (1997), no.~19, 2580--2592.

\bibitem{Zekovic}
Ana Zekovi\'c, \emph{Computation of {G}ordian distances and {$H_2$}-{G}ordian
  distances of knots}, Yugosl. J. Oper. Res. \textbf{25} (2015), no.~1,
  133--152. \MR{3331990}

\bibitem{Zhang}
Cheng-Zhong Zhang, Mitchell~L. Leibowitz, and David Pellman,
  \emph{Chromothripsis and beyond: rapid genome evolution from complex
  chromosomal rearrangements}, Genes {\&} Development \textbf{27} (2013),
  no.~23, 2513--2530.

\end{thebibliography}

\end{document}